\theoremstyle{definition}
 \newtheorem{dfn}{Definition}[section]
 \newtheorem{remark}[dfn]{Remark}
\theoremstyle{plain}
 \newtheorem{thm}[dfn]{Theorem}
 \newtheorem{lem}[dfn]{Lemma}
 \newtheorem{cor}[dfn]{Corollary}
\numberwithin{equation}{section}
\newcommand{\bA}{{\bold A}}
\newcommand{\bD}{{\bold D}}
\newcommand{\bG}{{\bold G}}
\newcommand{\bH}{{\bold H}}
\newcommand{\bI}{{\bold I}}
\newcommand{\bU}{{\bold U}}
\newcommand{\bV}{{\bold V}}
\newcommand{\bW}{{\bold W}}
\newcommand{\DV}{{\rm Div}\,}
\newcommand{\dv}{\, {\rm div}\,}
\newcommand{\BR}{{\Bbb R}}
\newcommand{\BN}{{\Bbb N}}
\newcommand{\BK}{{\Bbb K}}
\newcommand{\CA}{{\mathcal A}}
\newcommand{\CB}{{\mathcal B}}
\newcommand{\CD}{{\mathcal D}}
\newcommand{\CF}{{\mathcal F}}
\newcommand{\CI}{{\mathcal I}}
\newcommand{\CL}{{\mathcal L}}
\newcommand{\CR}{{\mathcal R}}
\newcommand{\CS}{{\mathcal S}}
\newcommand{\CT}{{\mathcal T}}
\newcommand{\CP}{{\mathcal P}}
\newcommand{\CX}{{\mathcal X}}
\newcommand{\bff}{{\bold f}}
\newcommand{\bv}{{\bold v}}
\newcommand{\bu}{{\bold u}}
\newcommand{\bg}{{\bold g}}
\newcommand{\bQ}{{\bold Q}}
\newcommand{\bP}{{\bold P}}
\newcommand{\bS}{{\mathbb S}}
\newcommand{\BS}{{\bold S}}
\newcommand{\bF}{{\bold F}}
\newcommand{\pd}{\partial}
\newcommand{\R}{\mathbb{R}}
\newcommand{\N}{\mathbb{N}}
\newcommand{\Z}{\mathbb{Z}}
\newcommand{\C}{\mathbb{C}}
\newcommand{\vp}{\varphi}
\newcommand{\fp}{{\frak p}}
\newcommand{\tr}{\mathrm{tr}}
\renewcommand{\Re}{{\rm Re}~}
\title{The global well-posedness for the Q-tensor model of nematic liquid crystals in the half-space}
\author{
Daniele Barbera,
\thanks{Department of Mathematical Sciences "Giuseppe Luigi Lagrange", Politecnico di Torino, 
Corso Duca degli Abruzzi 24, 10129 Torino, Italy
\endgraf 
e-mail address: daniele.barbera96@gmail.com
\endgraf
Partially supported by the project E53D23005450006 “Nonlinear dispersive equations in presence of singularities”- funded by European Union– Next Generation EU within the PRIN 2022 program (D.D.
 104- 02/02/2022 Ministero dell'Universit\`a e della Ricerca)
 and
 INDAM, GNAMPA group}
\enskip \enskip
Miho Murata
\thanks{Department of Mathematical and System Engineering,
Faculty of Engineering,
Shizuoka University, 
3-5-1 Johoku, Chuo-ku, Hamamatsu-shi, Shizuoka,
432-8561, Japan
\endgraf
e-mail address: murata.miho@shizuoka.ac.jp
\endgraf
Partially supported by JSPS Grant-in-Aid for Early-Career Scientists 21K13819 and Grant-in-Aid for Scientific Research (B) 23K22405
}
\enskip and \enskip
Yoshihiro Shibata
\thanks{Emeritus Professor of Waseda University, 
Waseda University, 3-4-1 Ohkubo Shinjuku-ku Tokyo, 169-8555, Japan
\endgraf
Adjunct faculty member in the Department of Mechanical Engineering and Materials Science, University of Pittsburgh, United States of America
\endgraf
e-mail address: yshibata325@gmail.com
\endgraf
Partially supported by JSPS Grant-in-Aid for Scientific Research (B) 23K22405}
}
\date{}
\begin{document}
\maketitle
\begin{abstract}
In this paper, we consider the Q-tensor model of nematic liquid crystals, which couples the Navier–Stokes equations with a parabolic-type equation describing the evolution of the directions of the anisotropic molecules, in the half-space.
The aim of this paper is to prove the global well-posedness for the Q-tensor model in the $L_p$-$L_q$ framework.
Our proof is based on the Banach fixed point argument.
To control the higher-order terms of the solutions, we prove the weighted estimates of the solutions for the linearized problem by the maximal $L_p$-$L_q$ regularity.
On the other hand, the estimates for the lower-order terms are obtained by the analytic semigroup theory.
Here, the maximal $L_p$-$L_q$ regularity and the generation of an analytic semigroup are provided by the $\CR$-solvability for the resolvent problem arising from the Q-tensor model.
It seems to be the first result to discuss the unique existence of a global-in-time solution for the Q-tensor model in the half-space. 
\end{abstract}

\section{Introduction}
In the Landau-De Gennes theory of nematic liquid crystals (c.f. \cite{DG, M}), the local orientation and degree of order of liquid crystal molecules are represented by a symmetric and traceless matrix order parameter, which is called the {\it Q-tensor}.
The Beris-Edwards model \cite{BE} is known as one of the models for liquid crystal flows in the context of continuum mechanics. 
The model couples the Navier–Stokes equations with a reaction–diffusion–convection equation for Q-tensor
describing the evolution of the directions of the anisotropic molecules.
From this observation, the Beris-Edwards model is also called the Q-tensor model of liquid crystals.

In this paper, we consider the global well-posedness for the Q-tensor model of liquid crystals in $\R^N_+$, $N \ge 2$.
\begin{equation}\label{nonlinear0}
\left\{
\begin{aligned}
	&\pd_t \bu + (\bu \cdot \nabla) \bu + \nabla \fp
	= \Delta \bu +\DV (\tau(\bQ)+\sigma(\bQ)), 
	\enskip \dv \bu=0
	& \quad&\text{in $\R^N_+$}, \enskip t \in \R_+, \\
	&\pd_t \bQ + (\bu \cdot \nabla) \bQ - \BS(\nabla \bu, \bQ)
	=\bH & \quad&\text{in $\R^N_+$}, \enskip t \in \R_+,\\
	&\bu = 0, \enskip \pd_N \bQ=0& \quad&\text{on $\R^N_0$, $t\in \R_+$},\\
	&(\bu, \bQ)|_{t=0} = (\bu_0, \bQ_0)& \quad&\text{in $\R^N_+$},
\end{aligned}\right.
\end{equation}
where $\bu = \bu(x, t) = (u_1(x, t), \ldots, u_N(x, t))^\mathsf{T} \footnote{$\bA^\mathsf{T}$denotes the transpose of $\bA$.}$ is the fluid velocity,
$\bQ = \bQ(x, t)$ is a symmetric and traceless matrix order parameter (i.e., the Q-tensor) describing the alignment behavior of molecule orientations,
and $\fp=\fp(x, t)$ is the pressure.
For a vector-valued function $\bv$ and a $N\times N$ matrix-valued function $\bA$ with the $(i, j)$ components $A_{ij}$,
we set 
\[
	\dv \bv = \sum_{j=1}^N\pd_j v_j, \quad \DV \bA = \left(\sum_{j=1}^N\pd_jA_{1j}, \sum_{j=1}^N\pd_jA_{2j}, \dots, \sum_{j=1}^N\pd_jA_{Nj}\right)^\mathsf{T},
\]
where $\pd_j=\pd/\pd x_j$.
The tensors $\BS(\nabla \bu, \bQ)$, $\tau(\bQ)$, and $\sigma(\bQ)$ are
\begin{align*}
\BS(\nabla \bu, \bQ)&=
(\xi \bD(\bu)+ \bW(\bu)) \left( \bQ + \frac{1}{N} \bI \right)
+\left( \bQ + \frac{1}{N} \bI \right)(\xi \bD(\bu)- \bW(\bu))
-2\xi\left( \bQ + \frac{1}{N} \bI \right) \tr(\bQ \nabla \bu),\\
\tau(\bQ) 
&=2\xi \tr(\bH \bQ)\left( \bQ + \frac{1}{N} \bI \right)
-\xi\left[\bH\left( \bQ + \frac{1}{N} \bI \right) + \left( \bQ + \frac{1}{N} \bI \right) \bH\right]
- L\nabla \bQ \odot \nabla \bQ,
\\
\sigma(\bQ)
&=\bQ \bH - \bH \bQ, 
\end{align*}
where 
\begin{align*}
	&\bD(\bu) = \frac12(\nabla \bu +(\nabla \bu)^\mathsf{T}), \quad
	\bW(\bu) = \frac12(\nabla \bu -(\nabla \bu)^\mathsf{T}),\\
	&(\nabla \bQ \odot \nabla \bQ)_{i j} = \sum^N_{k, \ell = 1} \pd_i Q_{k \ell} \pd_j Q_{k \ell},
\end{align*}
and $\bI$ is the $N \times N$ identity matrix.
A scalar parameter $\xi\in \R$
denotes the ratio between the tumbling 
and the aligning effects that a shear flow would exert over the directors.
Set
\[
	\bH = L\Delta \bQ - a\bQ + b\left(\bQ^2 - (\tr (\bQ^2))\bI/N \right) - c\tr(\bQ^2) \bQ. 
\] 
Note that $\bH$ is derived from the first order 
variation of the Landau-De Gennes free energy functional:
\[
	\CF(\bQ) = \int_{\R^N_+} \left(\frac{L}{2}|\nabla \bQ|^2 + F(\bQ)\right)\,dx,
\]
where 
$L>0$ is the elastic constant.
Hereafter, we set $L=1$ for simplicity. 
Furthermore, $F(\bQ)$ denotes the bulk energy of Landau-de Gennes type:
\[
	F(\bQ)=\frac{a}{2} \tr(\bQ^2) - \frac{b}{3} \tr(\bQ^3) + \frac{c}{4} (\tr(\bQ^2))^2
\]
with a material-dependent and temperature-dependent non-zero constant $a$ and material-dependent positive constants $b$ and $c$.
In addition, we assume that $\xi \neq 0$ and $a>0$ from a mathematical point of view.

The existence of solutions for the Q-tensor model has been discussed in the whole space or in bounded domains.
The existence of weak solutions was studied for $\xi=0$ or $\xi$ sufficiently small in $\R^N$, $N=2, 3$ (e.g., \cite{D, HD, PZ1, PZ2}).
Here, $\xi=0$ means that the molecules only tumble in a shear flow; however, they are not aligned by such a flow.
Abels, Dolzmann, and Liu \cite{Ab1} proved the existence of a strong local solution and 
global weak solutions with higher regularity 
in time, in the case of inhomogeneous mixed Dirichlet/Neumann boundary conditions in a bounded domain
without any smallness assumption on the parameter $\xi$. 
Liu and Wang \cite{LW} improved the spatial regularity of solutions obtained in \cite{Ab1}
and generalized their result to the case of anisotropic elastic energy.
Abels, Dolzmann, and Liu \cite{Ab2} also proved the local well-posedness in a bounded domain with the homogeneous Dirichlet boundary condition for the case $\xi=0$. 
These results \cite{Ab1, Ab2, LW} are obtained in the $L_2$-framework.
In the maximal $L_p$-$L_q$ regularity class,
Xiao \cite{X} proved the global well-posedness in a bounded domain for the case $\xi=0$.
Thanks to the assumption $\xi=0$, the maximal $L_p$-$L_q$ regularity for the Q-tensor model follows from it for the Stokes and parabolic operators.
Hieber, Hussein, and Wrona \cite{HHW} established the global well-posedness in a bounded domain for any $\xi$.
They proved that the linear operator is $\CR$-sectorial by proving that the linear operator is invertible and its numerical range lies in a certain sector, which is based on a classical result for unbounded operators in Hilbert spaces (cf. \cite{K}), which implies that the linear operator has the maximal $L_p$-$L_2$ regularity for $p>4/{4-N}$ with $N=2, 3$.
The whole-space problem was studied by Schonbek and the third author \cite{SS} and the second and third authors \cite{MS}.
For $1< p, q<\infty$, the maximal $L_p$-$L_q$ regularity was proved by the $\CR$-boundedness for the solution operators to the resolvent problem, 
where the resolvent parameter $\lambda$ is far away from the origin.
Furthermore, \cite{SS, MS} proved the decay estimates for the linearized problem based on the decay estimates for the heart semigroup, then the global well-posedness was established in $\R^N$, $N \ge 3$. 

On the other hand,
the half-space problem was first studied by the first and second authors \cite{BM}.
The local well-posedness in the maximal $L_p$-$L_q$ regularity class for the small initial data was obtained by \cite{BM}; however, the global well-posedness is an open problem even in the $L_2$-setting. 

In this paper, we prove 
the global well-posedness for \eqref{nonlinear0} based on the Banach fixed point argument.
Hereafter, we mainly consider the following problem, divided \eqref{nonlinear0} into the linear and the nonlinear terms.
\begin{equation}\label{nonlinear}
\left\{
\begin{aligned}
	&\pd_t\bu -\Delta \bu + \nabla \fp + \beta \DV (\Delta \bQ -a \bQ)=\bff(\bu, \bQ),
	\enskip \dv \bu=0& \quad&\text{in $\R^N_+$}, \enskip t \in \R_+,\\
	&\pd_t \bQ - \beta \bD(\bu) - \Delta \bQ + a \bQ =\bG(\bu, \bQ)& \quad&\text{in $\R^N_+$}, \enskip t \in \R_+,\\
	&\bu = 0, \enskip \pd_N \bQ=0& \quad&\text{on $\R^N_0$, $t\in \R_+$},\\
	&(\bu, \bQ)|_{t=0}=(\bu_0, \bQ_0)& \quad&\text{in $\R^N_+$},
\end{aligned}
\right.
\end{equation}
where 
\begin{align*}
	\beta
	&= 2\xi/N,\\
	\bff(\bu, \bQ)
	&= -(\bu \cdot \nabla) \bu + 
	\DV[2\xi \bold{H}: \bQ(\bQ+\bI/N) - (\xi + 1) \bold{H}\bQ +
	 (1-\xi) \bQ\bold{H} -\nabla \bQ \odot \nabla \bQ]- \beta \DV F'(\bQ),\\
	\bG(\bu, \bQ)
	&= -(\bu \cdot \nabla)\bQ + \xi(\bD(\bu) \bQ + \bQ \bD(\bu))
	+\bW(\bu) \bQ - \bQ \bW(\bu) -2\xi (\bQ +\bI/N) \bQ : \nabla \bu + F'(\bQ)
\end{align*}
Here, $F'(\bQ)$ is the nonlinear term of $\bH$; namely, $F'(\bQ) = b\left(\bQ^2 - (\tr (\bQ^2))\bI/N \right) - c \tr(\bQ^2) \bQ$.
Now, we state our method in more detail.
Let $\bU=(\bu, \bQ)$.
First, we consider the linearized system
\begin{equation}\label{model}
\left\{
\begin{aligned}
	&\pd_t \bU + \CA_q \bU=\bF& \quad&\text{in $\R^N_+$, $t\in \R_+$}, \\
	&\CB \bU = 0& \quad&\text{on $\R^N_0$, $t\in \R_+$},\\
	&\bU(0)=\bU_0 & \quad&\text{in $\R^N_+$},
\end{aligned}\right.
\end{equation}
where $\CA_q$ is a linear operator with a domain $\CD(\CA_q)$ defined in subsection \ref{subsec:main} below, $\CB \bU=(\bu, \pd_N \bQ)$, $\bF=(\bff, \bG)$ and $\bU_0=(\bu_0, \bQ_0)$ are given functions.
Assume that $\CA_q$ has the maximal $L_p$-$L_q$ regularity
and generates an analytic semigroup on the Banach space $\CX_q(\R^N_+)$.
Note that these facts can be proved by the fact that the family of solution operators for the resolvent problem arising from \eqref{model} is the $\CR$-bounded when the resolvent parameter is close to the origin (cf. \cite{BM2025}).
Then \eqref{model} has a solution $\bU$ satisfying
\begin{equation}\label{mr for model}
	\|(\pd_t, \CA_q) \bU\|_{L_p(\R_+, \CX_q(\R^N_+))} 
	\le C(\|\bU_0\|_{(\CX_q(\R^N_+), \CD(\CA_q))_{1-1/p, p}} + \|\bF\|_{L_p(\R_+, \CX_q(\R^N_+))}).
\end{equation}
Let us consider the weighted estimates of the higher-order terms for \eqref{model}.
Multiply $t$ with \eqref{model}, $\bU$ satisfies
\[
\left\{
\begin{aligned}
	&\pd_t (t\bU) + \CA_q (t\bU)=t\bF + \bU& \quad&\text{in $\R^N_+$, $t\in \R_+$}, \\
	&\CB (t\bU) = 0& \quad&\text{on $\R^N_0$, $t\in \R_+$},\\
	&t\bU(0)=0 & \quad&\text{in $\R^N_+$},
\end{aligned}\right.
\]
then it holds by \eqref{mr for model} that
\[
	\|(\pd_t, \CA_q) t\bU\|_{L_p(\R_+, \CX_q(\R^N_+))} \le C(\|t\bF\|_{L_p(\R_+, \CX_q(\R^N_+))} + \|\bU\|_{L_p(\R_+, \CX_q(\R^N_+))}).
\]
The estimates of the lower-order term $\|\bU\|_{L_p(\R_+, \CX_q(\R^N_+))}$ are provided by the boundedness and the decay estimate of the semigroup,
which is obtained by the resolvent estimates.
Then we arrive at the weighted estimates of the higher-order terms
\[
	\|(1+t)(\pd_t, \CA_q) \bU\|_{L_p(\R_+, \CX_q(\R^N_+))} \le C\left(\CI + \sum_{r \in \{q, \widetilde q\}}\|(1+t)\bF\|_{L_p(\R_+, \CX_r(\R^N_+))}\right)
\]
for some $p$, $q$, and $\widetilde q$,
where $\CI = \|\bU_0\|_{(\CX_q(\R^N_+), \CD(\CA_q))_{1-1/p, p}} + \|\bU_0\|_{\CX_{\widetilde q}}$.
Note that the additional regularity for the initial data is not necessary to obtain the weighted estimates. 
This approach for the linear system differs from \cite{SS, MS}.
Next, we consider \eqref{nonlinear}.
Set $\bF(\bU) = (\bff(\bU), \bG(\bU))$ and 
\[
	E(\bU) = \|(1+t)(\pd_t, \CA_q) \bU\|_{L_p(\R_+, \CX_q(\R^N_+))} + \|\bU\|_{L_p(\R_+, \CX_q(\R^N_+))} + \|\bU\|_{L_\infty(\R_+, \CX_q(\R^N_+))}.
\]
Since nonlinear terms have the quasi-linear term and the lower-order terms,
$\|(1+t)\bF(\bU)\|_{L_p(\R_+, \CX_q(\R^N_+))}$ is controlled by $E(\bU)$; therefore, we can apply the Banach fixed point argument for small initial data.
This method may be applied to other parabolic equations if the linear operator has the maximal $L_p$-$L_q$ regularity and generates an analytic semigroup.

This paper is organized as follows: Section \ref{sec:main} states the global well-posedness in the maximal $L_p$-$L_q$ regularity class as the main theorem in this paper.
In addition, we state the existence of the $\CR$-bounded solution operator families for the resolvent problem, which is the basis of the linear theory in our method. 
Section \ref{sec:mr} proves the maximal $L_p$-$L_q$ regularity estimates for the linearized problem.
The proof is divided into two parts: the estimates for the homogeneous system and the linear equations with zero initial conditions.
The first part is obtained by the $\CR$-solvability for the resolvent problem and  the Weis operator-valued Fourier multiplier theorem, while
the second part is proved by semigroup theory and the real interpolation argument. 
Section \ref{sec:weight} proves the weighted estimates of the higher-order terms for the linearized problem.
The estimates of the lower-order terms for the linearized problem can be obtained from the semigroup theory.
 Finally, Section \ref{sec:global} proves the global well-posedness for the small initial data based on the Banach fixed point argument.

\section{Main Theorem}\label{sec:main}
In this section, we state the global well-posedness for \eqref{nonlinear0} in the maximal $L_p$-$L_q$ regularity class.
\subsection{Notation}
Let us summarize several symbols and functional spaces used 
throughout the paper.
$\N$, $\R$, $\C$, and $\Z$ denote the sets of 
all natural numbers, real numbers, complex numbers, and integer number, respectively. 
We set $\N_0=\N \cup \{0\}$ and $\R_+ = (0, \infty)$. 
Let $q'$ be the dual exponent of $q$
defined by $q' = q/(q-1)$
for $1 < q < \infty$. 
For any multi-index $\alpha = (\alpha_1, \ldots, \alpha_N) 
\in \N_0^N$, we write $|\alpha|=\alpha_1+\cdots+\alpha_N$ 
and $D_x^\alpha=\pd_1^{\alpha_1} \cdots \pd_N^{\alpha_N}$ 
with $x = (x_1, \ldots, x_N)$ and $\pd_j=\pd/\pd x_j$. 
For $k \in \N_0$, scalar function $f$, $N$ vector-valued function $\bg$, 
and $N \times N$ matrix-valued function $\bG$, we set
\begin{gather*}
\nabla^k f = (D_x^\alpha f \mid |\alpha|=k),
\enskip \nabla^k \bg = (D_x^\alpha g_j \mid |\alpha|=k, \enskip j = 1,\ldots, N),\\
\nabla^k \bG = (D_x^\alpha G_{i j} \mid |\alpha|=k, \enskip i, j = 1,\ldots, N).
\end{gather*} 

Hereafter, we use small boldface letters, e.g. $\bff$ to 
denote vector-valued functions and capital boldface letters, e.g. $\bG$
to denote matrix-valued functions, respectively. 
The letter $C$ denotes generic constants, and the constant $C_{a,b,\ldots}$ depends on $a,b,\ldots$. 
The values of constants $C$ and $C_{a,b,\ldots}$ 
may change from line to line. 

For $N \in \N$, the Fourier transform $\CF$ and 
its inverse transform $\CF^{-1}$ are defined by 
\[
	\CF[f](\xi) = \int_{\R^N}e^{-ix\cdot\xi}f(x)\,dx, \quad
	\CF^{-1}_\xi[g](x) = \frac{1}{(2\pi)^N}\int_{\R^N}
	e^{ix\cdot\xi}g(\xi)\,d\xi. 
\]
Furthermore, the Laplace transform $\CL$ and its inverse transform $\CL^{-1}$
are defined by
\[
	\CL[f](\lambda) = \int_{\R}e^{-\lambda t}f(t)\,dt, \quad
	 \CL^{-1}[g](t)= \frac{1}{2\pi}\int_{\R}e^{\lambda t}g(\tau)\,d\tau,
\]
where $\lambda = \gamma + i\tau \in\C$, which are written by Fourier transform
and its inverse transform in $\R$ as 
\[
	\CL[f](\lambda) = \CF[e^{-\gamma t}f(t)](\tau),
	\quad \CL^{-1}[g](t) = e^{\gamma t}\CF^{-1}[g](\tau).
\]

For $N \in \N$, $1 \le p \le \infty$, and $m \in \N$,
$L_p(\R^N_+)$ and $H_p^m(\R^N_+)$ 
denote the Lebesgue space and the Sobolev space in $\R^N_+$;
while $\|\cdot\|_{L_q(\R^N_+)}$ and $\|\cdot\|_{H_q^m(\R^N_+)}$
denote their norms, respectively. 
In addition, $B^s_{q, p}(\R^N_+)$ is the Besov space in $\R^N_+$ for $1 < q < \infty$ and $s \in \R$ with the norm $\|\cdot\|_{B_{q, p}^s(\R^N_+)}$.
The $d$-product space of $X$ is defined by 
$X^d=\{f=(f, \ldots, f_d) \mid f_i \in X \, (i=1,\ldots,d)\}$,
while its norm is denoted by 
$\|\cdot\|_X$ instead of $\|\cdot\|_{X^d}$ for the sake of 
simplicity. 
The usual Lebesgue space and the Sobolev space of $X$-valued functions 
defined on time interval $I$ are denoted by $L_p(I, X)$ and $H^m_p(I, X)$ with $1 \le p \le \infty$ and $m \in \N$; 
while $\|\cdot\|_{L_p(I, X)}$, $\|\cdot\|_{H_p^m(I, X)}$
denote their norms, respectively.

For Banach spaces $X$ and $Y$, $\CL(X,Y)$ denotes the set of 
all bounded linear operators from $X$ into $Y$,
$\CL(X)$ is the abbreviation of $\CL(X, X)$, and 
$\rm{Hol}\,(U, \CL(X,Y))$ 
 the set of all $\CL(X,Y)$ valued holomorphic 
functions defined on a domain $U$ in $\C$. 
For the interpolation couple $(X, Y)$ of Banach spaces, $0<\theta<1$, and $1 \le p \le \infty$,
the real interpolation space is denoted by $(X, Y)_{\theta, p}$.

For Banach spaces $X$ and $N \in \N$, let $\CS(\R^N, X)$ be the Schwartz class
of $X$-valued functions on $\R^N$,
while $\CS'(\R^N, X)$ be the space of $X$-valued tempered distributions; namely, $\CS'(\R^N, X)= \CL(\CS(\R^N, X), X)$. 
For simplicity, we write $\CS(\R^N) = \CS(\R^N, \BK)$
$\CS'(\R^N) = \CS'(\R^N, \BK)$, where $\BK=\R$ or $\C$.

\subsection{The homogeneous Sobolev and Besov spaces}
In this subsection, we introduce the homogeneous Sobolev and Besov spaces in $\R^N_+$.
For $1 < q < \infty$, and $s \in \N$, the homogeneous Sobolev space $\dot H^s_q(\R^N)$ is defined as
\[
	\dot H^s_q(\R^N) = \{f \in \CS'(\R^N) \setminus \CP(\R^N) \mid \|f\|_{\dot H^s_q(\R^N)} < \infty\},
\]
where we have set
\[
	\|f\|_{\dot H^s_q(\R^N)} = \|\CF^{-1} [|\xi|^s \CF[f](\xi)]\|_{L_q(\R^N)}.
\]
Here, $\CP(\R^N)$ denotes the set of all polynomials.

Let us define the homogeneous Besov space.
Let $\phi \in \CS(\R^N)$ with ${\rm supp}~ \phi = \{\xi \in \R^N \mid 1/2 \le |\xi| \le 2\}$ such that 
$\sum_{j \in \Z} \phi(2^{-j}\xi) =1$ for any $\phi \in \R^N \setminus \{0\}$. Set $\phi_0 (\xi) = 1-\sum^\infty_{j=1} \phi(2^{-j}\xi)$.
Let $\{\dot \Delta_j\}_{j \in \Z}$ be the homogeneous family of Littlewood-Paley dyadic decomposition operators defined by
\[
	\dot \Delta_j f = \CF^{-1}[\phi(2^{-j}\xi) \CF[f](\xi)]
\]
for $j \in \Z$.
For $1 \le p, q \le \infty$ and $s \in \N$, we set
\[
	\|f\|_{\dot B^s_{q, p}(\R^N)} = \|2^{js} \|\dot \Delta_j f\|_{L_q(\R^N)}\|_{\ell^p(\Z)}.
\]
Then
the homogeneous Besov space $\dot B^s_{q, p}(\R^N)$ is defined as
\[
	\dot B^s_{q, p}(\R^N) = \{f \in \CS'(\R^N) \setminus \CP(\R^N) \mid \|f\|_{\dot B^s_{q, p}(\R^N)} < \infty\},
\]
where $\ell^p$ denotes sequence spaces.

Now, we define the homogeneous Sobolev spaces and the homogeneous Besov spaces in $\R^N_+$.
Let $1 \le p \le \infty$, $1 < q < \infty$, and $s \in \N$. 
For $X \in \{\dot H^s_q, \dot B^s_{q, p} \}$, we define
\[
	X(\R^N_+) = \{g|_{\R^N_+} = f \mid g \in X(\R^N) \}
\]
with the quotient norm $\|f\|_{X(\R^N_+)} = \displaystyle \inf_{\substack{g \in X(\R^N) \\ g|_{\R^N_+} = f}} \|g\|_{X(\R^N)}$. 
In particular, by this definition and $\dot H^2_q(\R^N)^N \cap L_q(\R^N)^N = H^2_q(\R^N)^N$ (cf. \cite[Theorem 6.3.2]{BL}), it holds that
\begin{equation}\label{homo Sobolev}
	\dot H^2_q(\R^N_+)^N \cap L_q(\R^N_+)^N = H^2_q(\R^N_+)^N.
\end{equation}

For simplicity, we set $\dot H^{0, 1}_q(\R^N_+)=L_q(\R^N_+) \times \dot H^1_q(\R^N_+)$.

\subsection{Main Theorem}\label{subsec:main}
To state the main theorem, we introduce some spaces.
Let $\bS_0 \subset \R^{N^2}$ denotes the set of the Q-tensor; namely,
\[
 	\bS_0=\{\bQ \in \R^{N^2} \mid \tr \bQ=0, \enskip \bQ=\bQ^\mathsf{T}\}.
\]

The space for the pressure term and a solenoidal space are defined as 
\begin{align*}
	\widehat H^1_{q, 0}(\R^N_+) &= \{f \in L_{q, {\rm loc}} (\R^N_+) \mid \nabla f \in L_q(\R^N_+), \enskip f = 0 \text{~on~} \R^N_0\},\\
	J_q(\R^N_+) &=\{\bu \in L_q(\R^N_+) \mid (\bu, \nabla \vp) = 0 \quad \forall \vp \in \widehat H^1_{q', 0}(\R^N_+)\}.
\end{align*}

Let us introduce the functional space for the initial data.
Define an operator $\CA_q$ and its domain $\CD(\CA_q)$ as
\begin{align*}
	\CD(\CA_q)
	&=\{(\bu, \bQ)\in (\dot H^2_q(\R^N_+)^N \cap J_q(\R^N_+)) \times (\dot H^3_q(\R^N_+; \bS_0) \cap \dot H^1_q(\R^N_+; \bS_0)) \mid \bu|_{x_N=0} = 0, \enskip \pd_N \bQ|_{x_N=0}=0\},\\
	\CA_q(\bu, \bQ)&=(\Delta \bu-\nabla K(\bu, \bQ)-\beta \DV(\Delta \bQ-a\bQ), \beta \bD(\bu) + \Delta \bQ - a\bQ) \enskip \text{for} \enskip (\bu, \bQ) \in \CD(\CA_q),
\end{align*}
where $p=K(\bu, \bQ)$ is a solution of the weak Dirichlet Neumann problem:
\[
	(\nabla p, \nabla \vp)=(\Delta \bu-\beta \DV(\Delta \bQ-a\bQ), \nabla \vp)
\]
for any $\vp \in \widehat H^1_{q', 0}(\R^N_+)$.
In addition, we set
\[
	\CX_q(\R^N_+)=J_q(\R^N_+) \times \dot H^1_q(\R^N_+; \bS_0).
\]
Then we define
\[
	\CD_{q, p}(\R^N_+)=(\CX_q(\R^N_+), \CD(\CA_q))_{1-1/p, p}.
\]
Taking into account \eqref{homo Sobolev} and 
\begin{equation}\label{interpolation Q}
\begin{aligned}
	&(\dot H^1_q(\R^N_+; \bS_0), \dot H^3_q(\R^N_+; \bS_0) \cap \dot H^1_q(\R^N_+; \bS_0))_{1-1/p, p} \\
	&= (\dot H^1_q(\R^N_+; \bS_0), \dot H^3_q(\R^N_+; \bS_0))_{1-1/p, p} \cap \dot H^1_q(\R^N_+; \bS_0)\\
	&= \dot B^{3-2/p}_{q, p}(\R^N_+; \bS_0) \cap \dot H^1_q(\R^N_+; \bS_0)
\end{aligned}
\end{equation}
(cf. \cite[Proposition B.2.7]{H} and \cite[Proposition 2.10]{SW}),
we have
\[
	\CD_{q, p}(\R^N_+) \subset B^{2(1-1/q)}_{q, p}(\R^N_+)^N \times (\dot B^{3-2/p}_{q, p}(\R^N_+; \bS_0) \cap \dot H^1_q(\R^N_+; \bS_0)).
\]

Let us state the main theorem in this paper.
\begin{thm}\label{thm:global}
Let $N \ge 2$, and let $0<\theta<1/2$.  
Assume that 
\begin{equation}\label{condi}
	\frac{1}{q_0}=\frac{1+2\theta}{N}, \quad 
	\frac{1}{q_1}=\frac{1+\theta}{N}, \quad \frac{1}{q_2}=\frac{\theta}{N}, \quad \frac{1}{p}<\frac{\theta}{2}.
\end{equation}
Let 
\[
	(\bu_0, \bQ_0) \in
	 \bigcap_{i=1}^2 \CD_{q_i, p}(\R^N_+)
	\bigcap (J_{q_0}(\R^N_+)\times \dot H^1_{q_0}(\R^N_+; \bS_0)),
\]
and let 
\begin{align*}
	E(\bu, \bQ)&=\sum^2_{i=1}(\|(1+t)(\pd_t, \nabla^2)(\bu, \bQ)\|_{L_p(\R_+, L_{q_i}(\R^N_+) \times \dot H^1_{q_i}(\R^N_+))}+\|(1+t)\nabla \bQ\|_{L_p(\R_+, L_{q_i}(\R^N_+)}\\
	&\quad +\|(\bu, \bQ)\|_{L_p(\R_+, L_{q_i}(\R^N_+)\times \dot H^1_{q_i}(\R^N_+))}
	+\|(\bu, \bQ)\|_{L_\infty(\R_+, L_{q_i}(\R^N_+)\times \dot H^1_{q_i}(\R^N_+))}).
\end{align*}
Then there exists a small number $\sigma>0$ such that 
\begin{equation}\label{smallness condi1}
	\sum^2_{i=1} \|(\bu_0, \bQ_0)\|_{\CD_{q_i, p}(\R^N_+)} + \|(\bu_0, \bQ_0)\|_{L_{q_0}(\R^N_+) \times \dot H^1_{q_0}(\R^N_+)} \le \sigma^2,
\end{equation}
problem \eqref{nonlinear0} has a unique solution $(\bu, \bQ, \fp)$ with
\begin{align}
	\pd_t \bu &\in \bigcap^2_{i=1} L_p(\R_+, L_{q_i}(\R^N_+)),&\bu &\in \bigcap^2_{i=1} L_p(\R_+, \dot H^2_{q_i}(\R^N_+)),\label{regularity:u}\\
	\pd_t \bQ &\in \bigcap^2_{i=1} L_p (\R_+, \dot H^1_{q_i}(\R^N_+; \bS_0)), & 
	\bQ &\in \bigcap^2_{i=1} L_p (\R_+, \dot H^1_{q_i}(\R^N_+; \bS_0) \cap \dot H^3_{q_i}(\R^N_+; \bS_0)), \nonumber\\
	\nabla \fp & \in \bigcap^2_{i=1} L_p(\R_+, L_{q_i}(\R^N_+)) & & \nonumber
\end{align}
satisfying 
\begin{equation}\label{est:large}
	E(\bu, \bQ) \le \sigma.
\end{equation}
In addition,
there exists a constant $C$ such that
\[
	\|(1+t) \nabla \fp\|_{L_p(\R_+, L_{q_i}(\R^N_+))} \le C \sigma
\]
for $i=1, 2$.
\end{thm}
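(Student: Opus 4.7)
The plan is to set up a Banach fixed-point iteration. Define the map $\Phi$ that sends $\bV=(\bv,\bP)$ to the solution $\bU=(\bu,\bQ)$ of the linearized system \eqref{nonlinear} whose right-hand side $(\bff,\bG)$ is frozen at $\bV$ and whose initial datum is $(\bu_0,\bQ_0)$. For a small radius $\sigma>0$ to be chosen, I would work in the complete metric space
\[
	\CI_\sigma=\{\bV=(\bv,\bP) \mid \dv\bv=0,\ \bV|_{t=0}=(\bu_0,\bQ_0),\ E(\bv,\bP)\le\sigma\}
\]
endowed with the metric $d(\bV_1,\bV_2)=E(\bV_1-\bV_2)$, and show that $\Phi(\CI_\sigma)\subset\CI_\sigma$ and that $\Phi$ is a contraction; the unique fixed point is then the desired solution.

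First, from the weighted maximal regularity of Section \ref{sec:weight} (applied at the two scales $q\in\{q_1,q_2\}$ with the auxiliary low scale $\tilde q=q_0$) together with the boundedness and decay of the analytic semigroup generated by $\CA_q$, the linear estimate
\[
	E(\Phi(\bV))\le C\Bigl(\sum_{i=1}^2\|(\bu_0,\bQ_0)\|_{\CD_{q_i,p}(\R^N_+)}+\|(\bu_0,\bQ_0)\|_{L_{q_0}(\R^N_+)\times\dot H^1_{q_0}(\R^N_+)}+\sum_{i=1}^2\CN_i(\bV)+\CN_0(\bV)\Bigr)
\]
will be available, where $\CN_i(\bV)=\|(1+t)(\bff(\bV),\bG(\bV))\|_{L_p(\R_+,\CX_{q_i}(\R^N_+))}$ for $i=1,2$ and $\CN_0(\bV)=\|(\bff(\bV),\bG(\bV))\|_{L_p(\R_+,\CX_{q_0}(\R^N_+))}$. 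By \eqref{smallness condi1} the data terms are already bounded by $\sigma^2$, so the task reduces to the nonlinear estimate $\sum_{i=0}^2\CN_i(\bV)\le C(E(\bV)^2+E(\bV)^3)$.

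The dangerous contributions in $\bff$ and $\bG$ are: (i) the convective terms $(\bv\cdot\nabla)\bv$ and $(\bv\cdot\nabla)\bP$; (ii) the $\bH$-type bilinear terms $\bP\bH$, $\bH\bP$ and the trilinear $\bP\bH\bP$, involving up to $\nabla^2\bP$; (iii) the Ericksen-type stress $\DV(\nabla\bP\odot\nabla\bP)$, which produces $\nabla\bP\cdot\nabla^2\bP$; (iv) the quasilinear coupling terms $\bD(\bv)\bP$, $\bW(\bv)\bP$, $(\bP+\bI/N)\bP:\nabla\bv$; (v) the polynomial bulk term $F'(\bP)$. The conditions \eqref{condi} are tuned so that the Sobolev embeddings $\dot H^1_{q_1}(\R^N_+)\hookrightarrow L_{q_2}(\R^N_+)$ (from $1/q_2=1/q_1-1/N$) and $\dot H^2_{q_0}(\R^N_+)\hookrightarrow L_\infty(\R^N_+)$ (since $q_0>N/2$ when $\theta<1/2$) are available. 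I would then bound each nonlinear product by H\"older in $x$ followed by H\"older in $t$, placing the weight $(1+t)$ on the factor carrying the highest-order spatial derivative---controlled by the weighted $L_p$-piece of $E(\bV)$---while the remaining factors are absorbed by the $L_\infty$-in-time and unweighted $L_p$-in-time pieces of $E(\bV)$; the assumption $1/p<\theta/2$ is precisely what closes the time-integrability in the cubic and quartic products.

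Putting these bounds together yields $E(\Phi(\bV))\le C\sigma^2+C(\sigma^2+\sigma^3)\le\sigma$ for $\sigma$ small. The contraction estimate $E(\Phi(\bV_1)-\Phi(\bV_2))\le\tfrac12 E(\bV_1-\bV_2)$ will follow by writing each nonlinear difference as a multilinear form with one factor from $\bV_1-\bV_2$ and repeating the same H\"older--Sobolev bookkeeping. The pressure $\fp$ is recovered \emph{a posteriori} as the solution of the weak Dirichlet--Neumann problem for $K(\bu,\bQ)$, and its weighted bound $\|(1+t)\nabla\fp\|_{L_p(\R_+,L_{q_i}(\R^N_+))}\le C\sigma$ is then read directly off the momentum equation in \eqref{nonlinear}. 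The hardest part will be the organization of the nonlinear bookkeeping: the three scales $q_0,q_1,q_2$ are required precisely so that the weight $(1+t)$ can always be placed on a factor for which $E(\bV)$ carries a weighted bound, and the auxiliary low scale $q_0$ is indispensable for closing the lower-order term $\|\bU\|_{L_p(\R_+,\CX_{q_i}(\R^N_+))}$ that arises upon multiplying \eqref{model} by $t$.
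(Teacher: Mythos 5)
Your overall strategy — Banach fixed point in the space $\CI_\sigma$ with the weighted maximal-regularity estimates of Section~\ref{sec:weight} applied at the two pairs $(\widetilde q,q)=(q_0,q_1)$ and $(q_1,q_2)$, and the nonlinear bookkeeping done by placing the weight on the highest-order factor — is exactly the paper's. However, one of your two stated Sobolev tools cannot do the job you assign it. You propose to control the $L_\infty$ factors via $\dot H^2_{q_0}(\R^N_+)\hookrightarrow L_\infty(\R^N_+)$. But $E(\bu,\bQ)$ controls second-order spatial derivatives of $\bu$ (and third-order of $\bQ$) only at the scales $q_1$ and $q_2$; the $q_0$ scale enters solely through the initial data and the low-order conclusion of Theorem~\ref{thm:weight}, never through a bound on $\nabla^2\bu$ or $\nabla^3\bQ$ in $L_{q_0}$. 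So $\dot H^2_{q_0}\hookrightarrow L_\infty$ is correct as an embedding but not available for the solution. The paper instead uses the Gagliardo--Nirenberg-type estimate of Lemma~\ref{sup},
\[
\|\nabla^\ell\bv\|_{L_\infty(\R^N_+)}\le C\|\nabla^{\ell+1}\bv\|_{L_{q_1}(\R^N_+)}^{1-\theta}\|\nabla^{\ell+1}\bv\|_{L_{q_2}(\R^N_+)}^{\theta},
\]
valid precisely under $(1-\theta)/q_1+\theta/q_2=1/N$ — which is exactly what \eqref{condi} encodes. This interpolates \emph{between} the two scales where $E$ carries bounds, rather than dropping to $q_0$. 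You should replace your $\dot H^2_{q_0}\hookrightarrow L_\infty$ step with this lemma; the embedding $\dot H^1_{q_1}\hookrightarrow L_{q_2}$ that you also invoke is correct and is used as well (for the $q_0$-level H\"older products).

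A smaller inaccuracy: in your linear estimate, the low-scale forcing term must carry the weight, i.e.\ $\CN_0(\bV)=\|(1+t)(\bff(\bV),\bG(\bV))\|_{L_p(\R_+,\CX_{q_0}(\R^N_+))}$, not the unweighted norm. This is because in Theorem~\ref{thm:weight} the quantity $\CF_{\widetilde q}$ that appears on the right-hand side is defined with the weight $(1+t)$, and in the proof of Lemma~\ref{lem:U1} that weight is used (in the step where one inserts $(1+|s|)^{-p'}$) to convert the convolution-type integral into a time-decaying bound. Without it the $L_p((1,\infty))$ control of the lower-order term would not close.
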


\begin{remark}
\begin{enumerate}

\item
By \eqref{regularity:u} and \eqref{est:large}, we observe that
\[
	\pd_t \bu \in \bigcap^2_{i=1} L_p(\R_+, L_{q_i}(\R^N_+)), \quad \bu \in \bigcap^2_{i=1} L_p(\R_+, \dot H^2_{q_i}(\R^N_+)) \cap L_p(\R_+, L_{q_i}(\R^N_+)),
\]
together with \eqref{homo Sobolev},
we have 
\[
	\bu \in \bigcap^2_{i=1} H^1_p(\R_+, L_{q_i}(\R^N_+)) \cap L_p(\R_+, H^2_{q_i}(\R^N_+)).
\]

\item
Thanks to \eqref{est:large} and Lemma \ref{sup} below, $\bQ$ satisfies
\[
	\bQ \in L_\infty(\R_+, L_\infty(\R^N_+)).
\]
\end{enumerate}
\end{remark}

\subsection{Preliminary}
First, we recall the definition of the $\CR$-boundedness.
\begin{dfn}\label{dfn2}
A family of operators $\CT \subset \CL(X,Y)$ is called $\CR$-bounded 
on $\CL(X,Y)$, if there exist constants $C > 0$ and $p \in [1,\infty)$ 
such that for any $n \in \BN$, $\{T_{j}\}_{j=1}^{n} \subset \CT$,
$\{f_{j}\}_{j=1}^{n} \subset X$ and sequences $\{r_{j}\}_{j=1}^{n}$
 of independent, symmetric, $\{-1,1\}$-valued random variables on $[0,1]$, 
we have  the inequality:
$$
\bigg \{ \int_{0}^{1} \|\sum_{j=1}^{n} r_{j}(u)T_{j}f_{j}\|_{Y}^{p}\,du
 \bigg \}^{1/p} \leq C\bigg\{\int^1_0
\|\sum_{j=1}^n r_j(u)f_j\|_X^p\,du\biggr\}^{1/p}.
$$ 
The smallest such $C$ is called $\CR$-bound of $\CT$, 
which is denoted by $\CR_{\CL(X,Y)}(\CT)$.
\end{dfn}
\begin{remark}\label{rem:def of rbdd}
The $\CR$-boundedness implies that the uniform boundedness of the operator family $\CT$.
In fact, choosing $m=1$ in Definition \ref{dfn2}, we observed that there exists a constant $C$ such that $\|T f\|_Y \le C \|f\|_X$ holds for any $T \in \CT$ and $f \in X$.
\end{remark}
Second, we state the results for $\CR$-bounded solution operator families for the resolvent problem: 
\begin{equation}\label{r0}
	\left\{
	\begin{aligned}
	&\lambda\bu -\Delta \bu + \nabla \fp + \beta \DV (\Delta \bQ -a \bQ)=\bff,
	\enskip \dv \bu=0& \quad&\text{in $\R^N_+$},\\
	&\lambda \bQ - \beta \bD(\bu) - \Delta \bQ + a \bQ =\bG& \quad&\text{in $\R^N_+$},\\
	&\bu=0, \enskip \pd_N \bQ=0 & \quad&\text{on $\R^N_0$},
	\end{aligned}
	\right.
\end{equation}
where $a > 0$, $\beta \neq 0$,
and $\lambda$ is the resolvent parameter varying in a sector
\[
	\Sigma_{\epsilon} 
	=\{\lambda \in \C\setminus \{0\} \mid |\arg \lambda| < \pi - \epsilon\} 
\]
for $\epsilon_0 < \epsilon < \pi/2$ with $\tan \epsilon_0 \ge |\beta|/\sqrt 2$. 
The following  theorem follows from \cite[Theorem 3.4.5]{BM}, \cite[Theorem 3.3, Remark 3.4, and Theorem 6.1]{BM2025}.

\begin{thm}\label{thm:Rbdd H}
Let $1 < q < \infty$, and let $\epsilon \in (\epsilon_0, \pi/2)$ with $\tan \epsilon_0 \ge |\beta|/\sqrt 2$. 
Let 
\begin{align*}
X_q(\R^N_+)&=L_q(\R^N_+)^N \times L_q(\R^N_+; \R^{N^3}), 
\end{align*}
and let 
$\bF=(\bff, \nabla \bG) \in X_q(\R^N_+)$.
There exist operator families 
\begin{align*}
	&\CA (\lambda) \in 
	{\rm Hol} (\Sigma_{\epsilon}, 
	\CL(X_q(\R^N_+), H^2_q(\R^N_+)^N))\\
	&\CB (\lambda) \in 
	{\rm Hol} (\Sigma_{\epsilon}, 
	\CL(X_q(\R^N_+), H^3_q(\R^N_+; \bS_0)))
\end{align*}
such that 
for any $\lambda = \gamma + i\tau \in \Sigma_{\epsilon}$,
$\bu = \CA (\lambda) \bF$ and 
$\bQ = \CB (\lambda) \bF$
are unique solutions of \eqref{r0},
and 
\begin{align*}
	&\CR_{\CL(X_q(\R^N_+), A_q(\R^N_+))}
	(\{(\tau \pd_\tau)^n \CS_\lambda \CA (\lambda) \mid 
	\lambda \in \Sigma_\epsilon\}) 
	\leq r,\\
	&\CR_{\CL(X_q(\R^N_+), B_q(\R^N_+))}
	(\{(\tau \pd_\tau)^n \CS_\lambda \CB (\lambda) \mid 
	\lambda \in \Sigma_\epsilon\}) 
	\leq r 
\end{align*}
for $\ell = 0, 1,$
where 
$\CS_\lambda = (\nabla^2, \lambda^{1/2}\nabla, \lambda)$,
$A_q(\R^N_+) = L_q(\R^N_+)^{N^3 + N^2+N}$,
$B_q(\R^N_+) = \dot H^1_q(\R^N_+; \R^{N^4}) \times \dot H^1_q(\R^N_+; \R^{N^3}) \times \dot H^1_q(\R^N_+; \bS_0)$,
and $r=r_{N, q}$ is a constant independent of $\lambda$.
\end{thm}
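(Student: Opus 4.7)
Since the theorem is stated as following from \cite[Theorem 3.4.5]{BM} and \cite[Theorem 3.3, Remark 3.4, Theorem 6.1]{BM2025}, the proposal is to sketch how those ingredients are assembled. The backbone is the standard partial-Fourier-transform technology for half-space resolvent problems: apply $\CF'$ in the tangential directions $x'=(x_1,\dots,x_{N-1})$ to reduce \eqref{r0} to a parametrized ODE system in $x_N \in (0,\infty)$; solve the decaying ODE explicitly; and verify the $\CR$-bounds by decomposing the resulting solution operators into tangential Fourier multipliers and one-dimensional kernel operators that fall in the scope of the Weis operator-valued Fourier multiplier theorem combined with the Enomoto--Shibata $\CR$-boundedness lemmas for symbols with the right homogeneity in $(|\lambda|^{1/2},|\xi'|)$ and kernels of the form $e^{-c\B(x_N+y_N)}$.

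\textbf{Reduction to a homogeneous half-space problem.} First I would extend the data $(\bff,\bG)$ to all of $\R^N$ by parity reflections compatible with the boundary conditions (odd in $x_N$ for components of $\bff$ adapted to $\bu|_{x_N=0}=0$, and a reflection for $\bG$ adapted to $\pd_N \bQ|_{x_N=0}=0$) and solve the whole-space resolvent problem, whose $\CR$-bounded solution operators are available from \cite{SS,MS}. Subtracting the whole-space solution reduces \eqref{r0} to a source-free half-space problem with inhomogeneous traces $(\bu,\pd_N\bQ)|_{x_N=0}=(\bh,\bK)$ whose norms are controlled by $\|\bF\|_{X_q(\R^N_+)}$ via the standard trace theorem. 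The pressure $\fp$ is then recovered at the end from the weak Dirichlet problem forced by $\dv \bu = 0$.

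\textbf{Explicit ODE solution and Lopatinskii analysis.} After $\CF'$, with $\A=|\xi'|$ and $\B=|\lambda|^{1/2}+\A$, the reduced system becomes a coupled linear ODE in $x_N$: the Stokes part on the transformed velocity contributes decaying characteristic roots $\A$ and $\sqrt{\lambda+\A^2}$, while the fourth-order operator arising from $\lambda\bQ-\Delta\bQ+a\bQ-\beta\bD(\bu)=\widehat{\bG}$ contributes two further decaying roots $M_\pm(\lambda,\xi')$ coming from a quadratic. Under the sector condition $\tan\epsilon_0\ge|\beta|/\sqrt 2$, all relevant roots have real part of order $\B$ uniformly on $\Sigma_\epsilon \times (\R^{N-1}\setminus\{0\})$. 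Writing the unique decaying solution as a linear combination of exponentials in these roots and imposing $(\bh,\bK)$ gives a Lopatinskii matrix whose determinant is uniformly bounded below by a positive power of $\B$; inverting it and combining with the pressure reconstruction yields the closed-form operators $\CA(\lambda)$ and $\CB(\lambda)$.

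\textbf{$\CR$-boundedness verification and main obstacle.} Each building block of the representation is of the form $f \mapsto (\CF')^{-1}[m(\lambda,\xi',x_N,\cdot)\widehat f(\xi',\cdot)]$, with symbols $m$ falling in classes for which the Enomoto--Shibata lemmas give $\CR$-bounded operators $L_q(\R^N_+)\to L_q(\R^N_+)$ or $L_q(\R^N_+)\to\dot H^1_q(\R^N_+)$. Counting homogeneity in $\B$, each of the three components of $\CS_\lambda=(\nabla^2,\lambda^{1/2}\nabla,\lambda)$ supplies exactly the weights needed for $\CS_\lambda \CA(\lambda)$ and $\CS_\lambda \CB(\lambda)$ to land in $A_q(\R^N_+)$ and $B_q(\R^N_+)$ respectively with uniform $\CR$-bound $r=r_{N,q}$. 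Each application of $\tau\pd_\tau$ acts as a differential operator in $\lambda$ preserving the symbol class, so the bound is stable for $\ell=0,1$. The principal obstacle is the uniform non-degeneracy of the Lopatinskii determinant: unlike the pure Stokes or decoupled parabolic case, the boundary conditions are coupled through the first-order $\beta$-terms $\beta\DV(\Delta\bQ-a\bQ)$ and $\beta\bD(\bu)$, and the lower bound on this determinant is precisely what forces the sector restriction $\tan\epsilon_0\ge|\beta|/\sqrt 2$; once the non-degeneracy is in hand, the $\CR$-boundedness is essentially bookkeeping on a standard symbol calculus.
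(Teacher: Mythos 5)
The paper offers no proof of Theorem~\ref{thm:Rbdd H}: it is imported wholesale from \cite[Theorem 3.4.5]{BM} and \cite[Theorems 3.3 and 6.1]{BM2025}, so there is no internal argument to compare against. Your outline correctly identifies the methodology of those references --- partial Fourier transform in $x'$, explicit resolution of the resulting ODE system in $x_N$, inversion of a Lopatinskii matrix, reconstruction of the pressure through the weak Dirichlet problem, and verification of $\CR$-boundedness by decomposing the solution formulas into multipliers with the right homogeneity in $(|\lambda|^{1/2},|\xi'|)$ and kernels of the type $e^{-c(|\lambda|^{1/2}+|\xi'|)(x_N+y_N)}$, to which the Enomoto--Shibata lemmas apply --- and it correctly locates the role of the restriction $\tan\epsilon_0\ge|\beta|/\sqrt2$ in the uniform non-degeneracy of the boundary determinant. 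In that sense you are following the same route as the source the paper relies on, not a different one.

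That said, as a proof the proposal remains a plan: the two items that constitute the actual mathematical content of the cited theorems are asserted rather than established. First, the characteristic roots of the coupled system and the explicit Lopatinskii determinant are never computed, so the claimed lower bound ``by a positive power of $|\lambda|^{1/2}+|\xi'|$ uniformly on $\Sigma_\epsilon$'' --- precisely the point where the coupling terms $\beta\DV(\Delta\bQ-a\bQ)$ and $\beta\bD(\bu)$ could cause failure --- is not verified. Second, the membership of each building block in the relevant symbol classes, and the stability of those classes under $\tau\pd_\tau$, requires term-by-term derivative estimates that can only be checked once the explicit formulas are in hand. A smaller caveat: a single parity reflection of $(\bff,\bG)$ cannot simultaneously respect $\dv\bu=0$, $\bu|_{x_N=0}=0$, and $\pd_N\bQ|_{x_N=0}=0$ for the coupled system, so the whole-space step must be understood merely as producing \emph{some} particular solution whose boundary traces are then corrected --- your sketch is consistent with this, but the burden of the proof then sits entirely on the uncomputed boundary correction. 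In short, the proposal reproduces the architecture of \cite{BM, BM2025} faithfully but defers exactly the computations that those papers exist to supply.
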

Note that the unique existence of the pressure $\fp$ follows from the unique solvability of the weak Dirichlet Neumann problem (cf. \cite[subsection 5.5]{BM2025}).
Theorem \ref{thm:Rbdd H}, together with Remark \ref{rem:def of rbdd}, implies that the resolvent estimates for \eqref{r0}.
\begin{cor}\label{cor:resolvent}
Let $1 < q < \infty$ and
$\epsilon \in (\epsilon_0, \pi/2)$ with $\tan \epsilon_0 \ge |\beta|/\sqrt 2$.  
Then for any $\lambda \in \Sigma_\epsilon$, $\bff \in L_q(\R^N_+)^N$ and $\bG \in \dot H^1_q(\R^N_+; \bS_0)$, 
there is a unique solution $(\bu, \bQ, \fp)$ for \eqref{r0}, unique up to additive constant on $\fp$, 
with $\bu \in H^2_q(\R^N_+)^N$, $\bQ \in H^3_q(\R^N_+; \bS_0)$, 
$\fp \in \widehat H^1_{q, 0}(\R^N_+)$, and
\begin{equation}\label{rem:resolvent}
\begin{aligned}
&\|(|\lambda|, |\lambda|^{1/2} \nabla, \nabla^2)(\bu, \bQ)\|_{L_q(\R^N_+) \times \dot H^1_q(\R^N_+)}
+\|\nabla \fp\|_{L_q(\R^N_+)}
\le C \|(\bff, \nabla \bG)\|_{L_q(\R^N_+)}.
\end{aligned}
\end{equation}
\end{cor}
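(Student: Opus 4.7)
The plan is to derive the corollary as a direct consequence of Theorem \ref{thm:Rbdd H} combined with Remark \ref{rem:def of rbdd}. Given $\bff \in L_q(\R^N_+)^N$ and $\bG \in \dot H^1_q(\R^N_+; \bS_0)$, form $\bF := (\bff, \nabla \bG) \in X_q(\R^N_+)$, so that $\|\bF\|_{X_q(\R^N_+)} = \|(\bff, \nabla \bG)\|_{L_q(\R^N_+)}$. I would then set $\bu := \CA(\lambda)\bF$ and $\bQ := \CB(\lambda)\bF$, where $\CA(\lambda), \CB(\lambda)$ are the holomorphic solution operators supplied by Theorem \ref{thm:Rbdd H}; by that theorem this is the unique pair in $H^2_q(\R^N_+)^N \times H^3_q(\R^N_+; \bS_0)$ solving \eqref{r0} for an appropriate pressure.

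For the quantitative estimate, the $\CR$-boundedness bound in Theorem \ref{thm:Rbdd H}, specialised to a single operator evaluated at one $\lambda$, reduces via Remark \ref{rem:def of rbdd} to the uniform inequality
\begin{equation*}
\|\CS_\lambda \CA(\lambda)\bF\|_{A_q(\R^N_+)} + \|\CS_\lambda \CB(\lambda)\bF\|_{B_q(\R^N_+)} \le C \|\bF\|_{X_q(\R^N_+)},
\end{equation*}
with $\CS_\lambda = (\nabla^2, \lambda^{1/2}\nabla, \lambda)$ and $C$ independent of $\lambda \in \Sigma_\epsilon$. Unpacking the definitions of $A_q(\R^N_+)$ and $B_q(\R^N_+)$ yields exactly the bound on $(\lambda, |\lambda|^{1/2}\nabla, \nabla^2)(\bu, \bQ)$ appearing in \eqref{rem:resolvent}.

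It remains to recover the pressure $\fp$ in the style already used in the definition of $\CA_q$. Rearranging the momentum equation forces $\nabla \fp = \bff - \lambda \bu + \Delta \bu - \beta \DV(\Delta \bQ - a\bQ)$; since $\dv \bu = 0$ together with $\bu|_{x_N=0} = 0$ makes the contribution of $\lambda \bu$ vanish when tested against $\nabla \vp$ for $\vp \in \widehat H^1_{q',0}(\R^N_+)$, the pressure $\fp$ is characterised by the weak Dirichlet--Neumann problem
\begin{equation*}
(\nabla \fp, \nabla \vp) = (\bff + \Delta \bu - \beta \DV(\Delta \bQ - a\bQ), \nabla \vp), \qquad \vp \in \widehat H^1_{q',0}(\R^N_+),
\end{equation*}
whose unique solvability in $\widehat H^1_{q,0}(\R^N_+)$, up to an additive constant, together with the bound $\|\nabla \fp\|_{L_q(\R^N_+)} \le C\|(\bff, \nabla^2 \bu, \nabla^3 \bQ, \nabla \bQ)\|_{L_q(\R^N_+)}$, is furnished by \cite[\S 5.5]{BM2025}. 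Combining this pressure estimate with the $(\bu, \bQ)$-bound above closes \eqref{rem:resolvent}.

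The one point that demands care, and which I would consider the only nontrivial step, is checking that the right-hand side of the Dirichlet--Neumann problem genuinely defines a bounded functional on $\nabla \widehat H^1_{q',0}(\R^N_+)$ uniformly in $\lambda$; this is essentially bookkeeping once the $H^2_q \times H^3_q$ regularity of $(\bu, \bQ)$ from Theorem \ref{thm:Rbdd H} has been established, so no substantive new analytic obstacle arises and the corollary is effectively an extraction from Theorem \ref{thm:Rbdd H}.
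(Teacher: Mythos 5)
Your proposal is correct and follows essentially the same route as the paper: the paper derives the corollary by pairing Theorem \ref{thm:Rbdd H} with Remark \ref{rem:def of rbdd} (the $n=1$ specialisation of $\CR$-boundedness) and notes that the pressure comes from the unique solvability of the weak Dirichlet--Neumann problem as in \cite[subsection~5.5]{BM2025}. Your spelled-out version of the pressure step, including the observation that $(\lambda\bu,\nabla\varphi)=0$ because $\dv\bu=0$ and $\varphi$ vanishes on $\R^N_0$, is exactly the reasoning behind the paper's one-line remark.
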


Finally, let us recall the Weis operator-valued Fourier multiplier theorem, which is one of the tools to obtain the maximal regularity.
Let $\CD(\BR,X)$ be the set of all $X$ 
valued $C^{\infty}$ functions having compact support,
Given $M \in L_{1,\rm{loc}}(\BR \backslash \{0\}, \CL(X, Y))$, 
we define the operator $T_{M} : \CF^{-1} \CD(\BR,X)\rightarrow \CS'(\BR,Y)$ 
by
\begin{align}\label{eqTM}
T_M \phi=\CF^{-1}[M\CF[\phi]],\quad (\CF[\phi] \in \CD(\BR,X)). 
\end{align}

\begin{thm}[Weis \cite{W}]\label{Weis}
Let $X$ and $Y$ be two UMD Banach spaces and $1 < p < \infty$. 
Let $M$ be a function in $C^{1}(\BR \backslash \{0\}, \CL(X,Y))$ such that 
\begin{align*}
\CR_{\CL(X,Y)} (\{(\zeta \frac{d}{d\zeta})^{\ell} M(\zeta) \mid
 \zeta \in \BR \backslash \{0\}\}) \leq m < \infty
\quad (\ell =0,1)
\end{align*}
with some constant $m$. 
Then the operator $T_{M}$ defined in \eqref{eqTM} 
is extended to a bounded linear operator from
 $L_{p}(\BR,X)$ into $L_{p}(\BR,Y)$. 
Moreover, denoting this extension by $T_{M}$, we have 
\begin{align*}
	\|T_{M}\|_{\CL(L_p(\BR,X),L_p(\BR,Y))} \leq Cm
\end{align*}
for some positive constant $C$ depending on $p$, $X$ and $Y$. 
\end{thm}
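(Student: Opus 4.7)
The plan is to adapt the classical scalar Mihlin multiplier proof to the operator-valued UMD setting, replacing pointwise bounds on $M$ by $\CR$-bounds on $\{M(\zeta)\}$ and $\{\zeta M'(\zeta)\}$, and replacing the usual $L_p$-boundedness of Littlewood--Paley projections by the characterization of UMD spaces via randomized square functions.

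\textbf{Step 1 (Dyadic reduction).} Choose $\psi_0 \in C^\infty_c(\BR)$ supported in $\{1/2 \le |\zeta| \le 2\}$ with $\sum_{j\in\Z} \psi_0(2^{-j}\zeta) = 1$ for $\zeta \ne 0$, and set $\psi_j(\zeta) = \psi_0(2^{-j}\zeta)$, $\Delta_j f = \CF^{-1}[\psi_j \CF f]$. By density it suffices to prove the inequality for $\phi \in \CF^{-1}\CD(\BR,X)$ with $\CF\phi$ supported away from the origin, so that $\phi = \sum_j \Delta_j \phi$ is a finite sum.

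\textbf{Step 2 (UMD square function).} Invoke Bourgain's theorem: for a UMD space $Z \in \{X,Y\}$ and $1<p<\infty$,
\begin{equation*}
  \|f\|_{L_p(\BR,Z)} \le C \Bigl(\int_0^1 \bigl\|\textstyle\sum_j r_j(u) \Delta_j f\bigr\|_{L_p(\BR,Z)}^p \, du\Bigr)^{1/p} \le C' \|f\|_{L_p(\BR,Z)}.
\end{equation*}
Pick enlarged cut-offs $\tilde\psi_j$ with $\tilde\psi_j \equiv 1$ on $\mathrm{supp}\,\psi_j$, supported in a slightly larger dyadic annulus, so that $\Delta_j T_M \phi = T_{\tilde\psi_j M}(\Delta_j\phi)$. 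Applying the left inequality with $Z=Y$ to $T_M\phi$ reduces the claim to estimating $\int_0^1 \|\sum_j r_j(u) T_{\tilde\psi_j M}(\Delta_j \phi)\|_{L_p(\BR,Y)}^p\,du$.

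\textbf{Step 3 (Splitting the symbol on a dyadic piece).} On each half-line, for $\zeta$ in $\mathrm{supp}\,\tilde\psi_j$ write
\begin{equation*}
  M(\zeta) = M(\pm 2^j) + \int_{\pm 2^j}^{\zeta} M'(s)\,ds
  = M(\pm 2^j) + \int_{\BR\setminus\{0\}} K_j(\zeta,s)\,(sM'(s))\,\frac{ds}{s},
\end{equation*}
where $K_j(\zeta,s) = \mathrm{sgn}(\zeta - \pm 2^j)\,\mathbf{1}_{\{s \text{ between } \pm 2^j \text{ and } \zeta\}}$ is a bounded scalar kernel supported in the same dyadic annulus. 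Thus $\tilde\psi_j M$ decomposes as a constant-operator piece $M(\pm 2^j)\tilde\psi_j$ plus a scalar-operator average of the $\CR$-bounded family $\{sM'(s)\}$.

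\textbf{Step 4 (Main term).} For the piece with $M(\pm 2^j)$, fix $x \in \BR$ and apply the definition of $\CR$-boundedness on the probability space $[0,1]$ with $f_j := T_{\tilde\psi_j}\Delta_j\phi(x) = \Delta_j\phi(x)$ (since $\tilde\psi_j = 1$ on $\mathrm{supp}\,\psi_j$):
\begin{equation*}
  \int_0^1 \bigl\|\textstyle\sum_j r_j(u) M(\pm 2^j)\Delta_j \phi(x)\bigr\|_Y^p\,du
  \le (Cm)^p \int_0^1 \bigl\|\textstyle\sum_j r_j(u) \Delta_j \phi(x)\bigr\|_X^p\,du.
\end{equation*}
Integrating in $x$ and invoking Step 2 with $Z=X$ bounds this by $(Cm)^p \|\phi\|_{L_p(\BR,X)}^p$.

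\textbf{Step 5 (Derivative term).} For the average piece, interchange the $s$-integral with the Fourier multiplier and with the Rademacher sum:
\begin{equation*}
  \sum_j r_j(u)\, T_{\tilde\psi_j(\cdot)\int K_j(\cdot,s)(sM'(s))\,ds/s}(\Delta_j\phi)
  = \int \sum_j r_j(u)\,(sM'(s))\, T_{\tilde\psi_j K_j(\cdot,s)}(\Delta_j\phi)\,\frac{ds}{s}.
\end{equation*}
For each fixed $s$, at most boundedly many indices $j$ contribute (those with $s$ in the enlarged dyadic annulus of index $j$), and $T_{\tilde\psi_j K_j(\cdot,s)}$ is a uniformly bounded scalar Fourier multiplier on $L_p(\BR,X)$ by the vector-valued Mihlin theorem (its symbol is a scalar times a smooth dyadic cut-off). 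Hence applying $\CR$-boundedness of $\{sM'(s)\}$ pointwise in $x$ exactly as in Step 4, and then Minkowski in $s$, produces the same bound $Cm\|\phi\|_{L_p(\BR,X)}$.

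\textbf{Main obstacle.} The genuine difficulty is Step 5: representing the integrated piece $\int_{\pm 2^j}^\zeta M'(s)\,ds$ as an $L^1$-average of the $\CR$-bounded family $\{sM'(s)\}$ with scalar kernels that remain uniformly bounded Fourier multipliers after the Rademacher randomization, so that $\CR$-boundedness (which a priori only supplies randomized $L^p$-estimates) can be applied inside the $s$-integral without losing any logarithmic factor. Once this bookkeeping is done, assembling Steps 2--5 yields $\|T_M\phi\|_{L_p(\BR,Y)} \le Cm \|\phi\|_{L_p(\BR,X)}$, and density extends $T_M$ to all of $L_p(\BR,X)$.
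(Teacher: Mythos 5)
First, note that the paper does not prove this statement: it is quoted verbatim as an external theorem of Weis \cite{W}, so there is no internal proof to compare against. Your sketch does follow the standard architecture of Weis's original argument — dyadic reduction, Bourgain's randomized Littlewood--Paley equivalence in UMD spaces, and the splitting of each localized symbol into its value at a dyadic point plus the integral of its derivative — so the overall strategy is the correct one.

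However, Step 5 contains a genuine gap, and it is not merely ``bookkeeping.'' Two things go wrong. (i) The kernels $K_j(\zeta,s)$ are indicators of intervals in $\zeta$, so $T_{\tilde\psi_j K_j(\cdot,s)}$ involves a sharp Fourier cutoff; this is not covered by a ``vector-valued Mihlin theorem'' (invoking which would in any case be circular), and, more importantly, uniform boundedness of these operators is not sufficient. After you apply the $\CR$-bound of $\{sM'(s)\}$ you are left with a randomized sum of the form $\int_0^1\bigl\|\sum_j r_j(u)\,R_{s_j}\Delta_j\phi\bigr\|_{L_p(\BR,X)}^p\,du$ in which sharp spectral projections at varying points sit \emph{inside} the Rademacher sum; to remove them you need the $\CR$-boundedness on $L_p(\BR,X)$ of the whole family of Fourier projections onto half-lines $\{T_{\mathbf{1}_{(a,\infty)}}\mid a\in\BR\}$, a substantive theorem of Bourgain valid precisely for UMD spaces, and this ingredient is absent from your outline. (ii) Your proposed order of operations — estimate for each fixed $s$ (where only $O(1)$ indices $j$ survive) and then apply Minkowski in $s$ — cannot close: the measure $ds/|s|$ assigns mass $\approx 1$ to \emph{every} dyadic annulus, so this route produces $\sum_j\|\Delta_j\phi\|_{L_p(\BR,X)}$, which is not controlled by $\|\phi\|_{L_p(\BR,X)}$ (take $\CF\phi$ spread over $n$ annuli with comparable pieces). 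The standard fix is to rescale $s=2^jt$ so that the full sum over $j$ stays inside a single Rademacher average for each fixed $t\in[1/2,2]$, apply for each such $t$ the $\CR$-bound of the family $\{2^jt\,M'(2^jt)\}_{j,t}$ (which lies in the absolute convex hull of the hypothesis set) together with the $\CR$-bound of the sharp projections from (i), and only then integrate over the compact $t$-range. Without these two ingredients the argument does not yield the stated bound $Cm$.
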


\section{Maximal regularity}\label{sec:mr}
In this section, we prove the maximal $L_p$-$L_q$ regularity for the following linearized problem: 
\begin{equation}\label{linear}
\left\{
\begin{aligned}
	&\pd_t\bu -\Delta \bu + \nabla \fp + \beta \DV (\Delta \bQ -a \bQ)=\bff,
	\enskip \dv \bu=0& \quad&\text{in $\R^N_+$}, \enskip t \in \R_+,\\
	&\pd_t \bQ - \beta \bD(\bu) - \Delta \bQ + a \bQ =\bG& \quad&\text{in $\R^N_+$}, \enskip t \in \R_+,\\
	&\bu = 0, \enskip \pd_N \bQ = 0& \quad&\text{on $\R^N_0$}, \enskip t \in \R_+,\\
	&(\bu, \bQ)|_{t=0}=(\bu_0, \bQ_0)& \quad&\text{in $\R^N_+$}.
\end{aligned}
\right.
\end{equation}
Let us state the main result in this section. 
\begin{thm}\label{thm:local mr}
Let $N \ge 2$.
Let $1<p, q<\infty$.
For any 
\[
\begin{aligned}
	&\bff \in L_p(\R_+, L_q(\R^N_+)^N),\quad
	\bG \in L_p(\R_+, \dot H^1_q(\R^N_+; \bS_0))
\end{aligned}
\]
and $(\bu_0, \bQ_0) \in \CD_{q, p}(\R^N_+)$,
the linearized problem
\eqref{linear} admits a unique solution
$(\bu, \bQ, \fp)$ with
\begin{align*}
	\pd_t \bu &\in L_p (\R_+, L_q(\R^N_+)^N), & 
	\bu &\in L_p (\R_+, \dot H^2_q(\R^N_+)^N),\\
	\pd_t \bQ &\in L_p (\R_+, \dot H^1_q(\R^N_+; \bS_0)), &
	\bQ &\in L_p (\R_+, \dot H^1_q(\R^N_+; \bS_0) \cap \dot H^3_q(\R^N_+; \bS_0)),\\
	\nabla \fp &\in L_p (\R_+, L_q(\R^N_+)^N) & &
\end{align*}
possessing the estimate:
\begin{equation}\label{local}
\begin{aligned}
		&\|(\pd_t, \nabla^2) (\bu, \bQ)\|_{L_p(\R_+, \dot H^{0, 1}_q(\R^N_+))}
		+\|\nabla \bQ\|_{L_p(\R_+, L_q(\R^N_+))}
		+\|\nabla \fp\|_{L_p(\R_+, L_q(\R^N_+))}\\
		&\enskip \le C (\|(\bu_0, \bQ_0)\|_{\CD_{q, p}(\R^N_+)} + \|(\bff, \nabla \bG)\|_{L_p(\R_+, L_q(\R^N_+))})
\end{aligned}
\end{equation}
with some positive constant $C$.
\end{thm}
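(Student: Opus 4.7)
By linearity I would split \eqref{linear} into two subproblems: (A) zero initial data $(\bu_0,\bQ_0)=0$ with forcing $(\bff,\bG)$, and (B) zero forcing with initial data $(\bu_0,\bQ_0)\in\CD_{q,p}(\R^N_+)$. Solving each separately and adding the solutions produces a solution of \eqref{linear}; uniqueness will follow from the resolvent well-posedness in Corollary \ref{cor:resolvent} applied to the difference of two solutions. At each step the pressure is reconstructed, up to an additive constant, from the weak Dirichlet--Neumann problem
\[
(\nabla\fp,\nabla\varphi) = (\Delta\bu-\beta\DV(\Delta\bQ-a\bQ),\nabla\varphi) \qquad \forall\varphi\in\widehat H^1_{q',0}(\R^N_+),
\]
whose solvability is the same one used to define $\CA_q$, so that $\nabla\fp$ inherits the $L_p(\R_+;L_q)$ bound from $\nabla^2\bu$ and $\nabla(\Delta\bQ-a\bQ)$.

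For subproblem (A) I would extend $(\bff,\bG)$ by zero to $t<0$, fix $\gamma>0$ so that $\gamma+i\tau\in\Sigma_\epsilon$ for every $\tau\in\R$, and take the Laplace transform in $t$. This converts \eqref{linear} into the resolvent problem \eqref{r0} at $\lambda=\gamma+i\tau$ with data $(\CL[\bff](\lambda),\CL[\bG](\lambda))$. Theorem \ref{thm:Rbdd H} produces the solution operators $\CA(\lambda),\CB(\lambda)$ together with the $\CR$-boundedness of $\{(\tau\pd_\tau)^n\CS_\lambda\CA(\lambda)\}$ and $\{(\tau\pd_\tau)^n\CS_\lambda\CB(\lambda)\}$ ($n=0,1$) uniformly in $\lambda\in\Sigma_\epsilon$, where $\CS_\lambda=(\nabla^2,\lambda^{1/2}\nabla,\lambda)$. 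Since the target and source spaces are $L_q$-based and hence UMD, Theorem \ref{Weis} (Weis) upgrades this to boundedness of the associated operator-valued Fourier multipliers from $L_p(\R;X_q(\R^N_+))$ into $L_p(\R;A_q(\R^N_+)\times B_q(\R^N_+))$. Inverting the Laplace transform and sending $\gamma\downarrow 0$ (the $\CR$-bound is uniform in $\Re\lambda>0$) yields
\[
\|(\pd_t,\nabla^2)(\bu,\bQ)\|_{L_p(\R_+;\dot H^{0,1}_q)} + \|\nabla\bQ\|_{L_p(\R_+;L_q)} \le C\|(\bff,\nabla\bG)\|_{L_p(\R_+;L_q)},
\]
which is \eqref{local} in the case $(\bu_0,\bQ_0)=0$.

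For subproblem (B), Corollary \ref{cor:resolvent} shows that $-\CA_q$ is sectorial on $\CX_q(\R^N_+)$ with sector $\Sigma_\epsilon$, so it generates a bounded analytic semigroup $\{e^{-t\CA_q}\}_{t\ge0}$. I would set $(\bu,\bQ)(t):=e^{-t\CA_q}(\bu_0,\bQ_0)$ and invoke the Da Prato--Grisvard characterization of the maximal $L_p$-regularity trace space: $t\mapsto\CA_q e^{-t\CA_q}\bU_0\in L_p(\R_+;\CX_q(\R^N_+))$ if and only if $\bU_0\in(\CX_q(\R^N_+),\CD(\CA_q))_{1-1/p,p}=\CD_{q,p}(\R^N_+)$, together with the bound
\[
\|\pd_t\bU\|_{L_p(\R_+;\CX_q(\R^N_+))} + \|\CA_q\bU\|_{L_p(\R_+;\CX_q(\R^N_+))} \le C\|\bU_0\|_{\CD_{q,p}(\R^N_+)}.
\]
Unpacking the definition of $\CA_q$ together with \eqref{homo Sobolev} and the interpolation identity \eqref{interpolation Q} recovers the full norm on $\nabla^2\bu$, $\nabla^2\bQ$ and $\nabla\bQ$, and the pressure is built as above.

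The main obstacle I anticipate is the interplay between the homogeneous Sobolev scales $\dot H^1_q,\dot H^2_q,\dot H^3_q$ and the interpolation/multiplier machinery: one must verify that the trace space for the \emph{coupled} operator $\CA_q$ really coincides with $\CD_{q,p}(\R^N_+)$ (this is why Theorem \ref{thm:Rbdd H} is stated with $\CB(\lambda)$ landing in a space carrying $\dot H^1_q$ factors, so that Weis's theorem closes in the homogeneous scale rather than in the inhomogeneous one). Passing $\gamma\downarrow 0$ in the Laplace transform step requires the $\CR$-bound on $\Sigma_\epsilon\setminus\{0\}$, which is exactly what Theorem \ref{thm:Rbdd H} provides; this is the delicate point that distinguishes the present half-space analysis from its bounded-domain counterpart where a spectral gap is available.
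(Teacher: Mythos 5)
Your decomposition into (A) zero initial data with forcing and (B) initial data with zero forcing, and your use of Theorem \ref{thm:Rbdd H} plus Theorem \ref{Weis} for (A) and the semigroup/trace-space characterization for (B), is essentially the same route as the paper. Two remarks are in order, one cosmetic and one pointing to a genuine gap.

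The cosmetic remark: for subproblem (A) the paper does not fix $\gamma>0$ and then let $\gamma\downarrow 0$; it works directly at $\lambda=i\tau\in i\R\setminus\{0\}$, which is admissible because Theorem \ref{thm:Rbdd H} gives the $\CR$-bound on all of $\Sigma_\epsilon$, including the punctured imaginary axis. Your limiting argument would also work, but it is unnecessary.

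The genuine gap is the step where you pass from ``the zero extension produces a solution on $\R$'' to ``this solution has vanishing initial value at $t=0$,'' so that the data left for subproblem (B) is really $(\bu_0,\bQ_0)$ and not $(\bu_0-\bu_1(0),\bQ_0-\bQ_1(0))$ with an unknown correction $\bU_1(0)$. The Fourier/Laplace construction hands you $\bU_1$ on the whole line; to identify $\bU_1(0)=0$ you need both a continuity-at-$t=0$ statement and a Paley--Wiener-type causality argument for the solution operator. The paper devotes several pages to exactly this: it shifts the contour via Cauchy's theorem to obtain the representation $\bU_1(t)=\int_{-\infty}^{t}S(t-s)\widetilde\bF(s)\,ds$, establishes the analytic-semigroup bound \eqref{est:semi}, shows that $\int_0^\infty S(\ell)\widetilde\bF(t-\ell)\,d\ell$ is continuous at $t=0$ by a density argument in $L_p(\R,L_q)$, and concludes $\bU_1(0)=\int_{-\infty}^0 S(-s)\widetilde\bF(s)\,ds=0$ because $\widetilde\bF$ is supported in $t>0$. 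Since the homogeneous scales $\dot H^1_q,\dot H^3_q$ do not let you shortcut this via the usual inhomogeneous trace theorem, this step must be carried out explicitly; your proposal silently assumes it. Once this is repaired, the rest of your argument (Da Prato--Grisvard trace space for (B), reconstruction of $\fp$ from the weak Dirichlet--Neumann problem, and uniqueness via the resolvent problem) matches the paper's reasoning, modulo the small point that the $\|\nabla\bQ\|_{L_p(\R_+,L_q)}$ term is recovered by reading off the second equation rather than directly from the maximal regularity estimate.
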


In order to show Theorem \ref{thm:local mr}, we
first consider 
\begin{equation}\label{eq:extend}
\left\{
\begin{aligned}
&\pd_t\bu -\Delta \bu + \nabla \fp + \beta \DV (\Delta \bQ -a \bQ)=\bff,
\enskip \dv \bu=0& \quad&\text{in $\R^N_+$}, \enskip t \in \R,\\
&\pd_t \bQ - \beta \bD(\bu) - \Delta \bQ + a \bQ =\bG& \quad&\text{in $\R^N_+$}, \enskip t \in \R,\\
&\bu=0, \enskip \pd_N \bQ=0 & \quad&\text{on $\R^N_0$}, \enskip t \in \R.
\end{aligned}
\right.
\end{equation}
Let
\[
	\bF(t)=(\bff(t), \nabla \bG(t)). 
\]
Thanks to Theorem \ref{thm:Rbdd H}, the solution $(\bu, \bQ)$ of \eqref{eq:extend} are written by
\begin{align*}
	(\pd_t, \nabla^2)\bu(\cdot, t)&=\CL^{-1}[(\lambda, \nabla^2)\CA(\lambda)\CL[\bF]](t)=\CF^{-1}[(\lambda, \nabla^2)\CA(\lambda)\CF[\bF]](t),
	\\
	(\pd_t \nabla , \nabla^3)\bQ(\cdot, t)&=\CL^{-1}[(\lambda \nabla, \nabla^3)\CB(\lambda)\CL[\bF]](t)=\CF^{-1}[(\lambda \nabla, \nabla^3)\CB(\lambda)\CF[\bF]](t)
\end{align*}
for $\lambda =i\tau \in i\R \setminus\{0\}$, which implies that
we are ready to apply Theorem \ref{Weis}.
Then we have
\[
	\|(\pd_t, \nabla^2) (\bu, \bQ)\|_{L_p(\R, \dot H^{0, 1}_q(\R^N_+))}
	\le C \|\bF\|_{L_p(\R, L_q(\R^N_+))}.
\]
Furthermore, the second equation of \eqref{eq:extend} yields that
\begin{align*}
	&\|\nabla \bQ\|_{L_p(\R, L_q(\R^N_+))}\\
	&\le C(\|\pd_t \nabla \bQ\|_{L_p(\R, L_q(\R^N_+))}+\|\nabla^2 \bu\|_{L_p(\R, L_q(\R^N_+))}+\|\nabla^3 \bQ\|_{L_p(\R, L_q(\R^N_+))}+\|\nabla \bG\|_{L_p(\R, L_q(\R^N_+))})\\
	&\le C \|\bF\|_{L_p(\R, L_q(\R^N_+))}.
\end{align*}
In the following, we consider the existence of the pressure term.
Let $(\bu, \bQ)$ be a solution of \eqref{eq:extend} for $\bF \in L_p(\R, \dot H^{0, 1}_q(\R^N_+))$.
The weak Dirichlet Neumann problem
\begin{align*}
	(\nabla p_1, \nabla \vp)&=(\Delta \bu - \beta \DV(\Delta \bQ-a\bQ), \nabla \vp)\\
	(\nabla p_2, \nabla \vp)&=(\bff, \nabla \vp) 
\end{align*}
have a unique solution $p_1(t)=K_1(\bu(t), \bQ(t)) \in \widehat H^1_{q, 0}(\R^N_+)$, $p_2(t)=K_2(\bff(t)) \in \widehat H^1_{q, 0}(\R^N_+)$ for any $\vp \in \widehat H^1_{q', 0}(\R^N_+)$, respectively, then setting
$\fp=K_1(\bu(t), \bQ(t))+K_2(\bff(t))$, $\fp$ is a solution of \eqref{eq:extend}
with
\begin{align*}
	\|\nabla \fp\|_{L_p(\R, L_q(\R^N_+))}&\le C(\|\Delta \bu - \beta \DV(\Delta \bQ-a\bQ)\|_{L_p(\R, L_q(\R^N_+))}
	+\|\bff\|_{L_p(\R, L_q(\R^N_+))})\\
	&\le C\|\bF\|_{L_p(\R, L_q(\R^N_+))}.
\end{align*}
Then we have the following lemma.

\begin{lem}\label{lem:extend}
Let $1<p, q<\infty$. For any $\bff$ and $\bG$ with
\[
	\bff \in L_p(\R, L_q(\R^N_+)^N),
	\quad \bG \in L_p(\R, \dot H^1_q(\R^N_+; \bS_0)),
\]
\eqref{eq:extend} admits 
a solution $(\bu, \bQ, \fp)$ with
\begin{equation}\label{regularity:ext}
\begin{aligned}
	\pd_t \bu &\in L_p (\R_+, L_q(\R^N_+)^N), & 
	\bu &\in L_p (\R_+, \dot H^2_q(\R^N_+)^N),\\
	\pd_t \bQ &\in L_p (\R_+, \dot H^1_q(\R^N_+; \bS_0)), &
	\bQ &\in L_p (\R_+, \dot H^1_q(\R^N_+; \bS_0) \cap \dot H^3_q(\R^N_+; \bS_0)),\\
	\nabla \fp &\in L_p (\R_+, L_q(\R^N_+)^N) & &
\end{aligned}
\end{equation}
possessing the estimate 
\begin{align*}
	&\|(\pd_t, \nabla^2) (\bu, \bQ)\|_{L_p(\R, \dot H^{0, 1}_q(\R^N_+))}
	+\|\nabla \bQ\|_{L_p(\R, L_q(\R^N_+))}
	+\|\nabla \fp\|_{L_p(\R, L_q(\R^N_+))}
	\le C \|(\bff, \nabla \bG)\|_{L_p(\R, L_q(\R^N_+))}. 
\end{align*}
\end{lem}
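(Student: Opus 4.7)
The plan is to construct the solution to \eqref{eq:extend} by Fourier synthesis in time applied to the $\CR$-bounded resolvent operator families from Theorem \ref{thm:Rbdd H}, and then invoke the Weis operator-valued Fourier multiplier theorem. Since $i\tau \in \Sigma_\epsilon$ for every $\tau \in \R \setminus \{0\}$ and every admissible $\epsilon \in (\epsilon_0, \pi/2)$, the operators $\CA(\lambda)$ and $\CB(\lambda)$ are well-defined along the imaginary axis. Setting $\bF(t)=(\bff(t),\nabla\bG(t))$ and defining
\[
\bu(\cdot,t) = \CF^{-1}[\CA(i\tau)\CF[\bF]](t), \qquad \bQ(\cdot,t) = \CF^{-1}[\CB(i\tau)\CF[\bF]](t),
\]
the quantities $(\pd_t,\nabla^2)\bu$ and $(\pd_t\nabla,\nabla^3)\bQ$ are recovered as Fourier multiplier operators with symbols $(\lambda,\nabla^2)\CA(i\tau)$ and $(\lambda\nabla,\nabla^3)\CB(i\tau)$ applied to $\CF[\bF]$.

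Next I would verify the hypotheses of Theorem \ref{Weis}. Theorem \ref{thm:Rbdd H} asserts that $\{\CS_\lambda \CA(\lambda)\}_{\lambda\in\Sigma_\epsilon}$ and $\{\CS_\lambda \CB(\lambda)\}_{\lambda\in\Sigma_\epsilon}$, together with their $(\tau\pd_\tau)$-derivatives, are $\CR$-bounded in $\CL(X_q(\R^N_+),A_q(\R^N_+))$ and $\CL(X_q(\R^N_+),B_q(\R^N_+))$ respectively. Restricting to $\lambda=i\tau$ gives $C^1(\R\setminus\{0\})$-valued symbols whose range and first derivative image remain $\CR$-bounded, so Theorem \ref{Weis} lifts each of them to a bounded operator from $L_p(\R,X_q(\R^N_+))$ into $L_p(\R,A_q(\R^N_+))$ or $L_p(\R,B_q(\R^N_+))$. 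Collecting the individual components gives
\[
\|(\pd_t,\nabla^2)(\bu,\bQ)\|_{L_p(\R,\dot H^{0,1}_q(\R^N_+))} \le C\|\bF\|_{L_p(\R,L_q(\R^N_+))}.
\]

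For the remaining estimate on $\|\nabla\bQ\|_{L_p(\R,L_q(\R^N_+))}$, I would use the second equation of \eqref{eq:extend} algebraically: since $a>0$, the identity $a\bQ = \bG - \pd_t\bQ + \beta\bD(\bu) + \Delta\bQ$ yields an $L_q$-bound on $\nabla\bQ$ in terms of $\nabla\bG$, $\pd_t\nabla\bQ$, $\nabla^2\bu$, and $\nabla^3\bQ$, all already controlled in the previous step. The pressure is reconstructed by the two weak Dirichlet--Neumann problems $(\nabla p_1,\nabla\vp)=(\Delta\bu-\beta\DV(\Delta\bQ-a\bQ),\nabla\vp)$ and $(\nabla p_2,\nabla\vp)=(\bff,\nabla\vp)$, each admitting a unique solution $p_j(t)\in \widehat H^1_{q,0}(\R^N_+)$ for almost every $t$; setting $\fp=K_1(\bu,\bQ)+K_2(\bff)$ closes the Stokes equation and a pointwise-in-$t$ estimate on $\nabla\fp$ followed by integration in time yields the $L_p(\R,L_q(\R^N_+))$ bound.

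The main technical point requiring care is that Theorem \ref{Weis} demands $C^1$ symbols on $\R\setminus\{0\}$ whose value and first $\tau$-derivative image are $\CR$-bounded, whereas Theorem \ref{thm:Rbdd H} is phrased on the open sector $\Sigma_\epsilon$; one must verify that restricting the resolvent families to $i\R\setminus\{0\}$ preserves both the analyticity and the $\CR$-bound on the operator and on $\tau\pd_\tau$ applied to it, and that the target spaces (closed subspaces and products of $L_q$ and $\dot H^1_q$) are UMD so that Theorem \ref{Weis} is applicable. Once these structural verifications are dispatched, assembling the component estimates gives \eqref{regularity:ext} and the full bound on $(\pd_t,\nabla^2)(\bu,\bQ)$, $\nabla\bQ$, and $\nabla\fp$.
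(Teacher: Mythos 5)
Your proposal follows essentially the same approach as the paper: construct $(\bu,\bQ)$ via Fourier synthesis using the $\CR$-bounded resolvent operator families $\CA(\lambda),\CB(\lambda)$ restricted to $\lambda=i\tau$, invoke the Weis multiplier theorem for the principal estimate, extract the $\nabla\bQ$ bound algebraically from the second equation using $a>0$, and recover the pressure from the two weak Dirichlet--Neumann problems $K_1,K_2$. The technical caveats you flag (restriction of the sector to $i\R\setminus\{0\}$, UMD property of the target spaces) are legitimate points that the paper handles implicitly, but they do not change the argument.
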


Second, we consider the following linearized problem in the semigroup setting.
\begin{equation}\label{eq:initial}
\left\{
\begin{aligned}
	&\pd_t\bu -\Delta \bu + \nabla \fp + \beta \DV (\Delta \bQ -a \bQ)=0,
	\enskip \dv \bu=0& \quad&\text{in $\R^N_+$}, \enskip t \in \R_+,\\
	&\pd_t \bQ - \beta \bD(\bu) - \Delta \bQ + a \bQ =0& \quad&\text{in $\R^N_+$}, \enskip t \in \R_+,\\
	&\bu=0, \enskip \pd_N \bQ=0 & \quad&\text{on $\R^N_0$}, \enskip t \in \R_+,\\
	&(\bu, \bQ)|_{t=0}=(\bu_0, \bQ_0)& \quad&\text{in $\R^N_+$}.
\end{aligned}
\right.
\end{equation}
Let us consider the resolvent problem
corresponding to \eqref{eq:initial}:
\begin{equation}\label{resolvent prob U2}
\left\{
\begin{aligned}
	&\lambda\bu -\Delta \bu + \nabla \fp + \beta \DV (\Delta \bQ -a \bQ)=\bff,
	\enskip \dv \bu=0& \quad&\text{in $\R^N_+$},\\
	&\lambda \bQ - \beta \bD(\bu) - \Delta \bQ + a \bQ =\bG& \quad&\text{in $\R^N_+$},\\
	&\bu=0, \enskip \pd_N \bQ=0 & \quad&\text{on $\R^N_0$}.
\end{aligned}
\right.
\end{equation}
For any $\bu \in \dot H^2_q(\R^N_+)^N$ and $\bQ \in \dot H^3_q(\R^N_+; \bS_0) \cap \dot H^1_q(\R^N_+; \bS_0)$,
let $p=K(\bu, \bQ) \in \widehat H^1_{q, 0}(\R^N_+)$ be a solution of the weak Dirichlet Neumann problem:
\[
	(\nabla p, \nabla \vp)=(\Delta \bu-\beta \DV(\Delta \bQ-a\bQ), \nabla \vp)
\]
for any $\vp \in \widehat H^1_{q', 0}(\R^N_+)$ satisfying
\[
	\|\nabla K(\bu, \bQ)\|_{L_q(\R^N_+)} \le C(\|\bu\|_{\dot H^2_q(\R^N_+)}+\|\bQ\|_{\dot H^1_q(\R^N_+)}+\|\bQ\|_{\dot H^3_q(\R^N_+)}).
\]
Then we introduce the reduced problem.
\begin{equation}\label{reduced}
\left\{
\begin{aligned}
	&\lambda\bu -\Delta \bu + \nabla K(\bu, \bQ) + \beta \DV (\Delta \bQ -a \bQ)=\bff& \quad&\text{in $\R^N_+$},\\
	&\lambda \bQ - \beta \bD(\bu) - \Delta \bQ + a \bQ =\bG& \quad&\text{in $\R^N_+$},\\
	&\bu=0, \enskip \pd_N \bQ=0 & \quad&\text{on $\R^N_0$}.
\end{aligned}
\right.
\end{equation}
If $\bff \in J_q(\R^N_+)$, the existence of a solution $(\bu, \bQ, \fp) \in H^2_q(\R^N_+)^N \times H^3_q(\R^N_+; \bS_0) \times \widehat H^1_{q, 0}(\R^N_+)$ to \eqref{resolvent prob U2} is equivalent to the existence of a solution $(\bu, \bQ) \in H^2_q(\R^N_+)^N \times H^3_q(\R^N_+; \bS_0)$ to \eqref{reduced}.
In particular, if $(\bu, \bQ) \in H^2_q(\R^N_+)^N \times H^3_q(\R^N_+; \bS_0)$ is a solution to \eqref{reduced}, we have $\bu \in J_q(\R^N_+)$.
Hence, we have $\dv \bu =0$ in the sense of distributions.
Recall the definitions of $\CD(\CA_q)$ and $\CA_q(\bu, \bQ)$, together with \eqref{homo Sobolev}, that
\begin{align*}
	\CD(\CA_q)&
	=\{(\bu, \bQ)\in (H^2_q(\R^N_+)^N \cap J_q(\R^N_+)) \times (\dot H^3_q(\R^N_+; \bS_0) \cap \dot H^1_q(\R^N_+; \bS_0)) \mid \bu|_{x_N=0} = 0, \enskip \pd_N \bQ|_{x_N=0}=0\},\\
	\CA_q(\bu, \bQ)&=(\Delta \bu-\nabla K(\bu, \bQ)-\beta \DV(\Delta \bQ-a\bQ), \beta \bD(\bu) + \Delta \bQ - a\bQ) \enskip \text{for} \enskip (\bu, \bQ) \in \CD(\CA_q).
\end{align*}
The resolvent estimate \eqref{rem:resolvent} implies that $\CA_q$ generates an analytic semigroup $\{T(t)\}_{t\ge 0}$ on $\CX_q(\R^N_+) = J_q(\R^N_+) \times \dot H^1_q(\R^N_+; \bS_0)$ 
with $\|(\bff, \bG)\|_{\CX_q(\R^N_+)}=\|(\bff, \bG)\|_{\dot H^{0, 1}_q(\R^N_+)}$. 
Furthermore, the following estimates follow from \eqref{rem:resolvent} and standard analytic semigroup arguments.
\begin{align}
	\|\pd_t T(t)(\bff, \bG)\|_{\CX_q(\R^N_+)}
	&\le Ct^{-1}\|(\bff, \bG)\|_{\CX_q(\R^N_+)}& \enskip&\text{for $(\bff, \bG)\in \CX_q(\R^N_+)$}, \label{est:semi1}\\
	\|\pd_t T(t)(\bff, \bG)\|_{\CX_q(\R^N_+)}
	&\le C\|\CA_q(\bff, \bG)\|_{\CX_q(\R^N_+)} 
	\le C \|(\bff, \bG)\|_{\CD(\CA_q)}& \enskip&\text{for $(\bff, \bG)\in \CD(\CA_q)$}.\label{est:semi2}
\end{align}
Here, $\|\cdot\|_{\CD(\CA_q)}$ denotes the graph norm of $\CA_q$.
In addition, it follows from the same method as the proof of \cite[Proposition 4.9 (1)]{DHMT} that $\|\CA_q(\bff, \bG)\|_{\CX_q(\R^N_+)}$ coincides with $\|\nabla^2(\bff, \bG)\|_{\CX_q(\R^N_+)}$.
Thus, we may write
\[
	\|(\bff, \bG)\|_{\CD(\CA_q)} = \|(\bff, \bG)\|_{H^2_q(\R^N_+) \times (\dot H^3_q(\R^N_+) \cap \dot H^1_q(\R^N_+))}.
\]
Recall that
\[
	\CD_{q, p}(\R^N_+)=(\CX_q(\R^N_+), \CD(\CA_q))_{1-1/p, p}.
\]
It holds by \eqref{est:semi1} and \eqref{est:semi2} with a real interpolation method that
\[
	\|\pd_t T(t)(\bff, \bG)\|_{L_p(\R_+, \CX_q(\R^N_+))}
	 \le C \|(\bff, \bG)\|_{\CD_{q, p}(\R^N_+)}
\]
for $(\bff, \bG) \in \CD_{q, p}(\R^N_+)$, where we refer \cite[Proof of Theorem 3.9]{SS2} for the details.
Since $\pd_t T(t) = \CA_q T(t)$, we have
\begin{equation}\label{est:semi3}
\begin{aligned}
	&\|\pd_t T(t)(\bff, \bG)\|_{L_p(\R_+, \CX_q(\R^N_+))}+\|\nabla^2 T(t)(\bff, \bG)\|_{L_p(\R_+, \CX_q(\R^N_+))}\\
	&\le C \|(\bff, \bG)\|_{\CD_{q, p}(\R^N_+)}.
\end{aligned}
\end{equation}
Therefore, setting $(\bu(t), \bQ(t)) = T(t)(\bu_0, \bQ_0)$ and $\fp=K(\bu(t), \bQ(t))$ for $(\bu_0, \bQ_0) \in \CD_{q, p}(\R^N_+)$,
$(\bu, \bQ, \fp)$ is a unique solution of \eqref{eq:initial} such that
$(\pd_t \bu, \pd_t \bQ) \in L_p(\R_+, \CX_q(\R^N_+))$, $(\bu, \bQ) \in L_p(\R_+, \CD(\CA_q))$,
and $\nabla \fp \in L_p(\R_+, L_q(\R^N_+)^N)$
with
\begin{align*}
	\|\pd_t (\bu, \bQ)\|_{L_p(\R_+, \CX_q(\R^N_+))}+\|\nabla^2 (\bu, \bQ)\|_{L_p(\R_+, \CX_q(\R^N_+))} + \|\nabla \fp\|_{L_p(\R_+, L_q(\R^N_+))}
	\le C \|(\bu_0, \bQ_0)\|_{\CD_{q, p}(\R^N_+)}.
\end{align*}
Furthermore, the second equation of \eqref{eq:initial} implies that 
\begin{align*}
	\|\nabla \bQ\|_{L_p(\R_+, L_q(\R^N_+))} 
	&\le C(\|\pd_t \nabla \bQ\|_{L_p(\R_+, L_q(\R^N_+))}+\|\nabla^2 \bu\|_{L_p(\R_+, L_q(\R^N_+))}+\|\nabla^3 \bQ\|_{L_p(\R_+, L_q(\R^N_+))})\\
	&\le C \|(\bu_0, \bQ_0)\|_{\CD_{q, p}(\R^N_+)}.
\end{align*}
Therefore, we have the following lemma.
\begin{lem}\label{lem:initial}
Let $N \ge 2$.
Let $1<p, q<\infty$.
For any 
$(\bu_0, \bQ_0) \in \CD_{q, p}(\R^N_+)$,
the linearized problem
\eqref{eq:initial} admits 
a solution $(\bu, \bQ, \fp)$ with
\begin{equation}\label{regularity:initial}
\begin{aligned}
	\pd_t \bu &\in L_p (\R_+, L_q(\R^N_+)^N), & 
	\bu &\in L_p (\R_+, \dot H^2_q(\R^N_+)^N),\\
	\pd_t \bQ &\in L_p (\R_+, \dot H^1_q(\R^N_+; \bS_0)), &
	\bQ &\in L_p (\R_+, \dot H^1_q(\R^N_+; \bS_0) \cap \dot H^3_q(\R^N_+; \bS_0)),\\
	\nabla \fp &\in L_p (\R_+, L_q(\R^N_+)^N) & &
\end{aligned}
\end{equation}
possessing the estimate 
\begin{align*}
	\|(\pd_t, \nabla^2) (\bu, \bQ)\|_{L_p(\R_+, \dot H^{0, 1}_q(\R^N_+))}
	+\|\nabla \bQ\|_{L_p(\R_+, L_q(\R^N_+))}
	+\|\nabla \fp\|_{L_p(\R_+, L_q(\R^N_+))}
	\le C \|(\bu_0, \bQ_0)\|_{\CD_{q, p}(\R^N_+)}.
\end{align*}
\end{lem}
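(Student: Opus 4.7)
The plan is to solve \eqref{eq:initial} via the analytic semigroup generated by $\CA_q$ on $\CX_q(\R^N_+)$. First I would verify that $\CA_q$ generates a bounded analytic semigroup $\{T(t)\}_{t\ge 0}$: solving the resolvent problem \eqref{resolvent prob U2} with $\bff \in J_q(\R^N_+)$ is equivalent, via the weak Dirichlet--Neumann problem $p = K(\bu,\bQ)$, to solving the reduced problem \eqref{reduced}, so the bound \eqref{rem:resolvent} from Corollary \ref{cor:resolvent} yields $\|\lambda(\lambda+\CA_q)^{-1}\|_{\CL(\CX_q(\R^N_+))}\le C$ on the sector $\Sigma_\epsilon$, which is the standard sectoriality criterion. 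Setting $(\bu(t),\bQ(t)) = T(t)(\bu_0,\bQ_0)$ and $\fp = K(\bu(t),\bQ(t))$ produces a candidate solution; the divergence-free condition on $\bu$, the Dirichlet condition $\bu|_{x_N=0}=0$, and the Neumann condition $\pd_N\bQ|_{x_N=0}=0$ are all encoded in $\CD(\CA_q)$.

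The core of the argument is the $L_p$-in-time regularity on $\R_+$. Analytic semigroup theory combined with the resolvent estimate \eqref{rem:resolvent} gives the two pointwise bounds \eqref{est:semi1} and \eqref{est:semi2}, namely $\|\pd_t T(t)U_0\|_{\CX_q} \le Ct^{-1}\|U_0\|_{\CX_q}$ for $U_0 \in \CX_q(\R^N_+)$ and $\|\pd_t T(t)U_0\|_{\CX_q} \le C\|U_0\|_{\CD(\CA_q)}$ for $U_0 \in \CD(\CA_q)$. A real-interpolation argument between these two endpoints, in the spirit of \cite[Proof of Theorem 3.9]{SS2}, produces
\[
\|\pd_t T(t)U_0\|_{L_p(\R_+,\CX_q(\R^N_+))} \le C\|U_0\|_{\CD_{q,p}(\R^N_+)}.
\]
Because $\pd_t T(t) = \CA_q T(t)$, the same bound holds for $\CA_q T(t)U_0$. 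To convert the abstract $\CA_q$-bound into the desired $\nabla^2$-bound I would show that the graph norm of $\CA_q$ is equivalent to $\|\cdot\|_{H^2_q(\R^N_+)\times(\dot H^3_q(\R^N_+)\cap \dot H^1_q(\R^N_+))}$ on $\CD(\CA_q)$, adapting \cite[Proposition 4.9 (1)]{DHMT}; this equivalence rests again on the resolvent solvability of \eqref{reduced}.

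With the $\pd_t$ and $\nabla^2$ estimates in hand, the pressure bound follows from $\fp = K(\bu(t),\bQ(t))$ together with the $L_q$-stability of the weak Dirichlet--Neumann map, $\|\nabla K(\bu,\bQ)\|_{L_q} \le C(\|\bu\|_{\dot H^2_q}+\|\bQ\|_{\dot H^1_q}+\|\bQ\|_{\dot H^3_q})$. Finally the lower-order gradient bound $\|\nabla\bQ\|_{L_p(\R_+,L_q)}$ is read off algebraically from the second equation of \eqref{eq:initial} by writing $\beta\bD(\bu) = \pd_t\bQ - \Delta\bQ + a\bQ$ and controlling the right-hand side with what has already been established. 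The hard part will be the real-interpolation step translating the pointwise semigroup decay into the $\CD_{q,p}$-controlled $L_p$-bound on $\pd_t T(t)U_0$, together with the verification that $\|\CA_q\,\cdot\,\|_{\CX_q}$ and $\|\nabla^2\,\cdot\,\|_{\CX_q}$ are equivalent on $\CD(\CA_q)$; the remainder is essentially bookkeeping.
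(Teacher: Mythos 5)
Your proposal matches the paper's proof step for step: generate the analytic semigroup from the resolvent estimate via the reduced problem, obtain the two pointwise bounds \eqref{est:semi1}--\eqref{est:semi2}, interpolate in the spirit of \cite[Proof of Theorem 3.9]{SS2} to get the $L_p$-in-time estimate on $\pd_t T(t)\bU_0$, identify the graph norm of $\CA_q$ with the Sobolev norms following \cite[Proposition 4.9 (1)]{DHMT}, recover the pressure from the weak Dirichlet--Neumann operator $K$, and read off $\|\nabla\bQ\|_{L_p L_q}$ from the second equation. This is essentially the same argument as in the paper, with no meaningful divergence.
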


Lemma \ref{lem:extend} and Lemma \ref{lem:initial} furnish the maximal regularity for \eqref{linear}.

\begin{proof}[Proof of Theorem \ref{thm:local mr}]
Let
$e_T[f]$ be a zero extension of $f$; namely,
\begin{equation}\label{extension}
	e_T[f]=
\left\{
\begin{aligned}
	&0& \quad&t<0,\\
	&f(t)& \quad&t>0.
\end{aligned}
\right.
\end{equation}
Let $\bU_j=(\bu_j, \bQ_j)$ for $j=1,2$.
Assume that $(\bU_1, \fp_1)$ and $(\bU_2, \fp_2)$ satisfy the following problems:
\begin{equation}\label{eq:U1}
\left\{
\begin{aligned}
	&\pd_t\bu_1 -\Delta \bu_1 + \nabla \fp_1 + \beta \DV (\Delta \bQ_1 -a \bQ_1)=e_T[\bff],
	\enskip \dv \bu_1=0& \quad&\text{in $\R^N_+$}, \enskip t \in \R,\\
	&\pd_t \bQ_1 - \beta \bD(\bu_1) - \Delta \bQ_1 + a \bQ_1 =e_T[\bG]& \quad&\text{in $\R^N_+$}, \enskip t \in \R,\\
	&\bu_1 = 0, \enskip \pd_N \bQ_1 = 0& \quad&\text{on $\R^N_0$}, \enskip t \in \R.
\end{aligned}
\right.
\end{equation}

\begin{equation}\label{eq:U2}
\left\{
\begin{aligned}
	&\pd_t\bu_2 -\Delta \bu_2 + \nabla \fp_2 + \beta \DV (\Delta \bQ_2 -a \bQ_2)=0,
	\enskip \dv \bu_2=0& \quad&\text{in $\R^N_+$}, \enskip t \in \R_+,\\
	&\pd_t \bQ_2 - \beta \bD(\bu_2) - \Delta \bQ_2 + a \bQ_2 =0& \quad&\text{in $\R^N_+$}, \enskip t \in \R_+,\\
	&\bu_2 = 0, \enskip \pd_N \bQ_2 = 0,& \quad&\text{on $\R^N_0$}, \enskip t \in \R_+,\\
	&(\bu_2, \bQ_2)|_{t=0}=(\bu_0-\bu_1(0), \bQ_0-\bQ_1(0)) & \quad&\text{in $\R^N_+$}.
\end{aligned}
\right.
\end{equation}
Then $\bU=\bU_1+\bU_2$ and $\fp=\fp_1+\fp_2$ satisfy \eqref{linear} for $t\in \R_+$.
In the following, we consider the estimates of $\bU_1$ and $\bU_2$.

First, we consider \eqref{eq:U1}.
Let 
$\widetilde \bF(t)=(e_T[\bff], \nabla e_T[\bG])$.
Then Lemma \ref{lem:extend}, together with  
\begin{equation*}\label{est:F}
	\|\widetilde \bF\|_{L_p(\R, L_q(\R^N_+))}
	\le C\|(\bff, \nabla \bG)\|_{L_p(\R_+, L_q(\R^N_+))},
\end{equation*} 
furnishes that there exists $(\bU_1, \fp_1)$ satisfying the regularity conditions 
\[
\begin{aligned}
	\pd_t \bu_1 &\in L_p (\R_+, L_q(\R^N_+)^N), & 
	\bu_1 &\in L_p (\R_+, \dot H^2_q(\R^N_+)^N),\\
	\pd_t \bQ_1 &\in L_p (\R_+, \dot H^1_q(\R^N_+; \bS_0)), &
	\bQ_1 &\in L_p (\R_+, \dot H^1_q(\R^N_+; \bS_0) \cap \dot H^3_q(\R^N_+; \bS_0)),\\
	\nabla \fp_1 &\in L_p (\R_+, L_q(\R^N_+)^N) & &
\end{aligned}
\]
and 
\begin{equation}\label{est:U1}
\begin{aligned}
	&\|(\pd_t, \nabla^2) \bU_1\|_{L_p(\R_+, \dot H^{0, 1}_q(\R^N_+))}
	+\|\nabla \bQ_1\|_{L_p(\R_+, L_q(\R^N_+))}
	+\|\nabla \fp_1\|_{L_p(\R_+, L_q(\R^N_+))}\\
	&\le C\|\widetilde \bF\|_{L_p(\R, L_q(\R^N_+))} 
	\le C\|(\bff, \nabla \bG)\|_{L_p(\R_+, L_q(\R^N_+))}.
\end{aligned}
\end{equation}

Next, we consider \eqref{eq:U2}. Let $\bU_0 = (\bu_0, \bQ_0) \in \CD_{q, p}(\R^N_+)$.
To apply Lemma \ref{lem:initial}, we verify the initial data for \eqref{eq:U2} belongs to $\CD_{q, p}(\R^N_+)$.
To achieve that,
let us prove $\bU_1(0)=0$.
First, we represent the solution formula of \eqref{eq:U1}.
Applying the Laplace transform to \eqref{eq:U1},
we have the resolvent problem:
\begin{equation}\label{resolvent eq:U1}
\left\{
\begin{aligned}
	&\lambda \widehat \bu_1 -\Delta \widehat \bu_1 + \nabla \widehat \fp_1 + \beta \DV (\Delta \widehat \bQ_1 -a \widehat \bQ_1)=\CL[e_T[\bff]],
	\enskip \dv \widehat \bu_1=0& \quad&\text{in $\R^N_+$},\\
	&\lambda \widehat \bQ_1 - \beta \bD(\widehat \bu_1) - \Delta \widehat \bQ_1 + a \widehat \bQ_1 =\CL[e_T[\bG]]& \quad&\text{in $\R^N_+$},\\
	&\widehat \bu_1 = 0, \enskip \pd_N \widehat \bQ_1=0 & \quad&\text{on $\R^N_0$},
\end{aligned}
\right.
\end{equation}
where we have set $\CL[f]=\widehat f$.
Theorem \ref{thm:Rbdd H} implies that 
$\widehat \bu_1=\CA(\lambda) \CL[\widetilde \bF]$ and
$\widehat \bQ_1=\CB(\lambda) \CL[\widetilde \bF]$
satisfy \eqref{resolvent eq:U1} for $\lambda \in \Sigma_\epsilon$. 
Thus, we may write solution formulas for $(\bu_1, \bQ_1)$ of \eqref{eq:U1} as follows:
\[
	\bu_1=\CL^{-1}[\CA(\lambda) \CL[\widetilde \bF]],  \enskip
	\bQ_1=\CL^{-1}[\CB(\lambda) \CL[\widetilde \bF]].
\]
Let $\gamma_0>0$.
Since $\CL[\widetilde \bF]$, $\CA(\lambda)$, and $\CB(\lambda)$ are holomorphic for $\Re \lambda \ge \gamma_0$, the Cauchy's theorem and the Fubini's theorem furnish that
\begin{equation}\label{formula:u1}
\begin{aligned}
	\bu_1&=\CL^{-1}[\CA(\lambda) \CL[\widetilde \bF]]\\
	&=\frac{1}{2\pi} \int^\infty_{-\infty} e^{(\gamma_0 + i \tau) t}\CA(\gamma_0 + i \tau) \int^\infty_{-\infty} e^{-(\gamma_0 + i \tau) s} \widetilde \bF(s)\,ds\,d\tau\\
	&=\frac{1}{2\pi} \int^\infty_{-\infty} \int^\infty_{-\infty} e^{(\gamma_0 + i \tau) (t-s)}\CA(\gamma_0 + i \tau) \widetilde \bF(s)\,d\tau\,ds,
\end{aligned}
\end{equation}
and also
\begin{equation}\label{formula:q1}
	\bQ_1=\CL^{-1}[\CB(\gamma_0 + i \tau) \CL[\widetilde \bF]]
	=\frac{1}{2\pi} \int^\infty_{-\infty} \int^\infty_{-\infty} e^{(\gamma_0 + i \tau) (t-s)}\CB(\gamma_0 + i \tau) \widetilde \bF(s)\,d\tau\,ds.
\end{equation}
Let
\begin{equation}\label{semi'}
	S(t) \widetilde \bF = 
\left\{
\begin{aligned}
	&\frac{1}{2\pi} \int^\infty_{-\infty} e^{(\gamma_0 + i \tau) t} (\CA(\gamma_0 + i \tau) \widetilde \bF, \CB(\gamma_0 + i \tau) \widetilde \bF)\,d\tau &\quad &\text{for $t\neq 0$},\\
	&\widetilde \bF &\quad &\text{for $t=0$}.
\end{aligned}
\right.
\end{equation}
By \eqref{formula:u1} and \eqref{formula:q1},
we may write
\begin{equation}\label{formula:U1}
\bU_1(t) =\int^\infty_{-\infty} S(t-s) \widetilde \bF(s)\,ds
\end{equation}
for $t \neq 0$.
Let $\Gamma_\omega = \Gamma_\omega^+ \cup \Gamma_\omega^- \cup C_\omega$
for $\omega >0$, where
\begin{equation}\label{int path}
\begin{aligned}
	\Gamma_\omega^\pm &= \{\lambda = re^{\pm i(\pi - \epsilon)} \mid \omega < r < \infty\},\\
	C_\omega &= \{\lambda = \omega e^{i\eta} \mid -(\pi - \epsilon) < \eta < (\pi - \epsilon)\}.
\end{aligned}
\end{equation}
By the same calculation as \cite[proof of Theorem 5.1]{SW},
we have
	\begin{numcases}
{S(t) \widetilde \bF =}	 
	0 &\enskip \text{for $t<0$}, \label{semi negative}\\
	\frac{1}{2\pi i} \int_{\Gamma_\omega} e^{\lambda t}
	(\CA(\lambda) \widetilde \bF, \CB(\lambda) \widetilde \bF)\,d\lambda
	&\enskip \text{for $t>0$}, \label{semi}\\
	\widetilde \bF &\enskip \text{for $t=0$} \label{semi t0}.
	\end{numcases}
It holds by \eqref{rem:resolvent} that
\[
	\|(\CA(\lambda) \widetilde \bF, \CB(\lambda) \widetilde \bF)\|_{\dot H^{0, 1}_q(\R^N_+)} 
	\le C|\lambda|^{-1}\|\widetilde \bF\|_{L_q(\R^N_+)}
\]
for $\lambda \in \Sigma_\epsilon$.  
Then according to the argument in the theory of an analytic semigroup, 
\eqref{semi} and \eqref{semi t0} 
imply that $\{S(t)\}_{t \ge 0}$ is analytic semigroup generated by $\CA_q$.
In particular, there exists a constant $M>0$ such that
\begin{equation}\label{est:semi}
	\|S(t) \widetilde \bF\|_{\dot H^{0, 1}_q(\R^N_+)} \le M\|\widetilde \bF\|_{L_q(\R^N_+)}.
\end{equation}
Set
\begin{align*}
	V_1 &= \{\bu \mid \bu \in L_p(\R_+, \dot H^2_q(\R^N_+)), \pd_t \bu \in L_p(\R_+, L_q(\R^N_+))\},\\
	V_2 &=\{\bQ \mid \bQ \in L_p(\R_+, \dot H^3_q(\R^N_+; \bS_0) \cap \dot H^1_q(\R^N_+; \bS_0)), \pd_t \bQ \in L_p(\R_+, \dot H^1_q(\R^N_+; \bS_0))\}.
\end{align*}
The embedding property \cite[(1.17)]{T}, together with \eqref{interpolation Q}, furnishes that
\begin{align*}
	&V_1 \subset C([0, \infty), \dot B^{2(1-1/p)}_{q, p}(\R^N_+)),\\
	&V_2
	\subset C([0, \infty), \dot H^1_q(\R^N_+; \bS_0) \cap \dot B^{3-2/p}_{q, p}(\R^N_+; \bS_0))
\end{align*}
for $1<p, q<\infty$.
Since $\bU_1$ satisfies \eqref{regularity:ext}, we have $\bU_1 \in V_1 \times V_2$; therefore,
$\bU_1(t)$ is continuous at $t=0$. 
Furthermore, by \eqref{formula:U1} and \eqref{semi negative}, we may write
\[
\bU_1(t) =
\left\{
	\begin{aligned}
	& \int^t_{-\infty} S(t-s) \widetilde \bF(s)\,ds&\quad &\text{for $t\neq 0$},\\
	&\lim_{t\to 0} \int^t_{-\infty} S(t-s) \widetilde \bF(s)\,ds&\quad &\text{for $t=0$}.
\end{aligned}
\right.
\]
Here, we prove  
\[
	\int^t_{-\infty} S(t-s) \widetilde \bF(s)\,ds
\]
is continuous at $t=0$.
Since  
\begin{equation}\label{conti0}
\int^t_{-\infty} S(t-s) \widetilde \bF(s)\,ds = \int^\infty_0 S(\ell) \widetilde \bF(t-\ell)\,d\ell,
\end{equation}
we prove that the right-hand side of \eqref{conti0} is continuous at $t=0$.
It follows from \eqref{est:semi} that
\begin{equation}\label{conti1}
	\|\int^\infty_0 S(\ell) (\widetilde \bF(t-\ell) - \widetilde \bF(-\ell))\,d\ell\|_{\dot H^{0, 1}_q(\R^N_+)} \le M \int^\infty_0 \|\widetilde \bF(t-\ell) - \widetilde \bF(-\ell)\|_{L_q(\R^N_+)} \,d\ell.
\end{equation}
Set $\delta_0 = M^{-1}$.
The definition of $\widetilde \bF$ implies that
$\widetilde \bF(t-\ell) = 0$ for $|t|<\delta_0$ if $\ell>\delta_0$.
Therefore, for $|t| < \delta_0$, we have
\begin{equation}\label{conti2}
	\int^\infty_0 \|\widetilde \bF(t-\ell) - \widetilde \bF(-\ell)\|_{L_q(\R^N_+)} \,d\ell 
	= \int^{\delta_0}_0 \|\widetilde \bF(t-\ell) - \widetilde \bF(-\ell)\|_{L_q(\R^N_+)} \,d\ell.
\end{equation}
Furthermore, since $C^\infty_0(\R, L_q(\R^N_+))$ is dense in $L_p(\R, L_q(\R^N_+))$, for any $\epsilon$, there exists $\delta_1>0$ such that for all $t \in \R$ with $|t|<\delta_1$, $\widetilde \bF$ satisfies 
\begin{equation}\label{conti3}
	\|\widetilde \bF(t) - \widetilde \bF(0)\|_{L_q(\R^N_+)} < \epsilon.
\end{equation}
Combining \eqref{conti1}, \eqref{conti2}, and \eqref{conti3}, 
for $|t| < \delta:=\min\{\delta_0, \delta_1\}$, we have
\[
	\|\int^\infty_0 S(\ell) (\widetilde \bF(t-\ell) - \widetilde \bF(-\ell))\,d\ell\|_{\dot H^{0, 1}_q(\R^N_+)} < \epsilon,
\]
which implies that the right-hand side of \eqref{conti0} is continuous at $t=0$.
Therefore, we may write
\begin{equation*}\label{U1}
\bU_1(t) =\int^t_{-\infty} S(t-s) \widetilde \bF(s)\,ds
\end{equation*}
for any $t\in \R$.
In particular, we have
\[
	\bU_1(0) =\int^0_{-\infty} S(-s) \widetilde \bF(s)\,ds.
\] 
Then it holds by \eqref{est:semi} that
\begin{equation}\label{U1(0)}
	\|\bU_1(0)\|_{\dot H^{0, 1}_q(\R^N_+)} \le M \int^0_{-\infty} \|\widetilde \bF(s)\|_{L_q(\R^N_+)} \,ds = 0,
\end{equation}
which implies that $\bU_1(0)=0$.
Therefore, we can apply Lemma \ref{lem:initial} to \eqref{eq:U2}, then it holds that
there exists $(\bU_2, \fp_2)$ satisfying the regularity conditions \eqref{regularity:initial}
and 
\begin{equation*}\label{est:initial}
\begin{aligned}
	\|(\pd_t, \nabla^2)\bU_2\|_{L_p(\R_+, \dot H^{0, 1}_q(\R^N_+))}
	+\|\nabla \bQ_2\|_{L_p(\R_+, L_q(\R^N_+))}
	+\|\nabla \fp_2\|_{L_p(\R_+, L_q(\R^N_+))}
	 \le C\|(\bu_0, \bQ_0)\|_{\CD_{q, p}(\R^N_+)},
\end{aligned}
\end{equation*}
together with \eqref{est:U1}, we have \eqref{local}.

Finally, we mention the uniqueness of the solutions.
Let us consider the homogeneous equation:
\begin{equation}\label{homo U}
	\pd_t \bU-\CA_q \bU=0 \enskip \text{in $\R^N_+$, \enskip $t \in \R_+$},
	\quad
	\bU|_{t=0}=0
\end{equation}
with
\begin{equation}\label{space U}
	\pd_t \bU \in L_p(\R_+, \CX_q(\R^N_+)), \enskip \bU \in L_p(\R_+, \CD(\CA_q)).
\end{equation}
Let $\bV$ be the zero extension of $\bU$ to $t<0$.
Then \eqref{homo U} implies that $\bV$ satisfies  
\[
	\pd_t \bV-\CA_q \bV=0 \enskip \text{in $\R^N_+$, \enskip $t \in \R$}.
\]
For any $\lambda \in \C$ with $\Re \lambda=\gamma > 0$, 
we set 
\[
	\widehat \bU(\lambda) = \int^\infty_{-\infty} e^{-\lambda t} \bV(t)\,dt
	=\int^\infty_0 e^{-\lambda t} \bU(t)\,dt.
\]
H\"older inequality and \eqref{space U} implies that
\begin{align*}
	\|\widehat \bU(\lambda)\|_{\CD(\CA_q)}
	&\le \left( \int^\infty_0 e^{-\gamma tp'}\,dt \right)^{1/p'}
	\|\bU\|_{L_p(\R_+, \CD(\CA_q))}\\
	&=(\gamma p')^{-1/p'} \|\bU\|_{L_p(\R_+, \CD(\CA_q))}.
\end{align*}
Note that $\lambda \widehat \bU$ is also meaningful in $\dot H^{0, 1}_q(\R^N_+)$. 
In fact, 
since $\lambda \widehat \bU = \int^\infty_0 e^{-\lambda t}\pd_t \bU\,dt$,
we have
\begin{align*}
	\|\lambda \widehat \bU(\lambda)\|_{\dot H^{0, 1}_q(\R^N_+)}
	&\le (\gamma p')^{-1/p'} \|\pd_t\bU\|_{L_p(\R_+, \dot H^{0, 1}_q(\R^N_+))}.
\end{align*}
Therefore, $\widehat \bU \in \CD(\CA_q)$ satisfies the resolvent problem:
\begin{equation}\label{resolvent U}
	\lambda \widehat \bU -\CA_q \widehat \bU = 0 \enskip \text{in $\R^N_+$}.
\end{equation}
Theorem \ref{thm:Rbdd H} implies that \eqref{resolvent U} has a unique solution for $\lambda \in \Sigma_\epsilon$; thus, we have $\widehat \bU(\lambda)=0$
for any $\lambda \in \C$ with $\gamma >0$.
Applying the Laplace inverse transform to $\widehat \bU(\lambda)=0$, we have
$\bV(t)=0$ for $t \in \R$. Therefore, we have $\bU(t)=0$ for $t>0$, which shows the uniqueness of \eqref{homo U}. 
Then $\nabla \fp=0$ also holds by the weak Dirichlet Neumann problem.
This completes the proof of Theorem \ref{thm:local mr}.
\end{proof}

\section{Weighted estimates}\label{sec:weight}
In this section, we prove the weighted estimates for the solutions of \eqref{linear}.
Let
\[
	\bF(t) = (\bff(t), \nabla \bG(t)), \quad
	\CF_q=\|(1+t)\bF(t) \|_{L_p(\R_+, L_q(\R^N_+))}.
\]
\begin{thm}\label{thm:weight}
Let $(\bu, \bQ, \fp)$ be a solution to \eqref{linear}
under the same assumption in Theorem \ref{thm:local mr}.
Then 
\begin{equation}\label{low infty} 
	\|(\bu, \bQ)\|_{L_\infty(\R_+, \dot H^{0, 1}_q(\R^N_+))} \le C(\|(\bu_0, \bQ_0)\|_{\dot H^{0, 1}_q(\R^N_+)}+\CF_q).
\end{equation}
In addition, let ${\widetilde q}$ be an index such that $1<{\widetilde q}<q$ and let $\kappa = N(1/{\widetilde q}-1/q) \le 1$.
If $1/p<\kappa/2$, the following estimates hold. 
\begin{equation}\label{low lp}
	\|(\bu, \bQ)\|_{L_p(\R_+, \dot H^{0, 1}_q(\R^N_+))}
	\le C \sum_{r\in \{q, {\widetilde q}\}}(\|(\bu_0, \bQ_0)\|_{\dot H^{0, 1}_r(\R^N_+)}+\CF_r), 
\end{equation}
\begin{equation}\label{weight:0}
\begin{aligned}
	&\|(1+t)(\pd_t, \nabla^2)(\bu, \bQ)\|_{L_p(\R_+, \dot H^{0, 1}_q(\R^N_+))}
	+\|(1+t)\nabla \bQ\|_{L_p(\R_+, L_q(\R^N_+))} +\|(1+t)\nabla \fp\|_{L_p(\R_+, L_q(\R^N_+))} \\
	&\enskip \le C (\|(\bu_0, \bQ_0)\|_{\CD_{q, p}(\R^N_+)} + \|(\bu_0, \bQ_0)\|_{\dot H^{0, 1}_{\widetilde q}(\R^N_+)}+\CF_q+\CF_{{\widetilde q}}).
\end{aligned}
\end{equation}
\end{thm}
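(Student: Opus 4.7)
Following the strategy outlined in the introduction, the three estimates of Theorem~\ref{thm:weight} are obtained in order: \eqref{low infty} and \eqref{low lp} from the analytic semigroup $\{S(t)\}_{t\ge 0}$ generated by $\CA_q$ on $\CX_q(\R^N_+)$, and then \eqref{weight:0} by applying Theorem~\ref{thm:local mr} to the system satisfied by $(1+t)\bU$. Throughout, the solution of \eqref{linear} is represented via the Duhamel formula
\[
	\bU(t)=S(t)\bU_0+\int_0^t S(t-s)\bF^*(s)\,ds,
\]
where $\bF^*(s)=(P_q\bff(s),\bG(s))$ with $P_q$ the Helmholtz projection onto $J_q(\R^N_+)$; the pressure is recovered a posteriori from the weak Dirichlet--Neumann problem exactly as in the proof of Theorem~\ref{thm:local mr}.

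The semigroup input needed for \eqref{low infty} is the uniform bound $\|S(t)\bV\|_{\CX_q(\R^N_+)}\le C\|\bV\|_{\CX_q(\R^N_+)}$, $t>0$, which follows from the $\CR$-boundedness of the resolvent on a full sector including a neighbourhood of the origin (\cite{BM2025}). Inserting this into Duhamel and splitting $\bF(s)=(1+s)^{-1}\cdot(1+s)\bF(s)$, H\"older's inequality in $s$ with conjugate exponent $p'$ yields $\|\bU(t)\|_{\dot H^{0,1}_q(\R^N_+)}\le C(\|\bU_0\|_{\dot H^{0,1}_q(\R^N_+)}+\CF_q)$ uniformly in $t$, since $(1+s)^{-p'}\in L^1(\R_+)$ for $p>1$; this is \eqref{low infty}.

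For \eqref{low lp} the further ingredient is the $L_p$--$L_q$ type decay
\[
	\|S(t)\bV\|_{\CX_q(\R^N_+)}\le C\,t^{-\kappa/2}\|\bV\|_{\CX_{\widetilde q}(\R^N_+)}\qquad(t\ge 1),
\]
which I would extract from the $\CR$-bound of the resolvent near $\lambda=0$ (\cite{BM2025}) by a Sobolev-type trade between spatial regularity and $|\lambda|$-weight, followed by a contour deformation in the Laplace inverse formula for $S(t)$. Splitting the time axis into $(0,1)$ (uniform boundedness) and $(1,\infty)$ (decay $t^{-\kappa/2}$) makes the homogeneous part $L_p$-integrable precisely when $\kappa p/2>1$, i.e.\ $1/p<\kappa/2$. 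The Duhamel term is handled in the same spirit: writing $\bF^*(s)=(1+s)^{-1}\cdot(1+s)\bF^*(s)$ and applying a Hardy--Littlewood--Sobolev / weighted convolution estimate against $s^{-\kappa/2}$ keeps the $L_p(\R_+,\CX_q)$-norm finite under the same threshold, which produces the two-exponent form of \eqref{low lp}.

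For \eqref{weight:0} I set $\widetilde{\bU}(t)=(1+t)\bU(t)$ and $\widetilde{\fp}(t)=(1+t)\fp(t)$. A direct computation shows that $(\widetilde{\bU},\widetilde{\fp})$ solves the linear system \eqref{linear} with source $((1+t)\bff+\bu,\,(1+t)\bG+\bQ)$ and unchanged initial data $\bU_0\in \CD_{q,p}(\R^N_+)$. Applying Theorem~\ref{thm:local mr} to this equation bounds the left-hand side of \eqref{weight:0} by $\|\bU_0\|_{\CD_{q,p}(\R^N_+)}+\CF_q+\|\bU\|_{L_p(\R_+,\dot H^{0,1}_q(\R^N_+))}$, and the last summand is absorbed by \eqref{low lp}; this is where the auxiliary index $\widetilde q$ enters the final bound. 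The main technical obstacle is the sharp $L_p$--$L_q$ decay of $S(t)$: extracting the rate $\kappa/2$ from the fine behaviour of the resolvent near $\lambda=0$ for this coupled, pressure-carrying system on the half-space is delicate, and it is precisely this step that forces the restriction $\kappa\le 1$ (the usual gain $t^{-1/2}$ of parabolic half-space semigroups) and thereby the index conditions~\eqref{condi}.
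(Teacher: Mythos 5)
Your plan is correct and matches the paper's approach in all essentials: represent $\bU$ via the semigroup and its Duhamel formula, obtain \eqref{low infty} from uniform boundedness plus H\"older, obtain \eqref{low lp} from the $t^{-\kappa/2}$ smoothing estimate with a split at $t=1$ and a split of the Duhamel integral at $s=t/2$, and obtain \eqref{weight:0} by applying Theorem~\ref{thm:local mr} to the weighted unknown. The paper implements the same ideas with two purely cosmetic differences: instead of a Helmholtz-projected Duhamel formula it writes $\bU=\bU_1+\bU_2$, where $\bU_1(t)=\int_{-\infty}^t S(t-s)\widetilde\bF(s)\,ds$ solves the whole-line problem with zero-extended data (using the already-established fact $\bU_1(0)=0$), and $\bU_2(t)=T(t)\bU_0$ is the homogeneous evolution, which keeps the pressure handling inside the resolvent solution operators $\CA(\lambda),\CB(\lambda)$ rather than via $P_q$; and it multiplies \eqref{linear} by $t$ rather than $(1+t)$, then adds the unweighted estimate \eqref{local}, which is equivalent to your direct $(1+t)$-multiplication.

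One substantive correction: you describe extracting the decay rate $t^{-\kappa/2}$ from the behaviour of the resolvent near $\lambda=0$ as the ``main technical obstacle'' and attribute the restriction $\kappa\le 1$ to a structural ceiling on the gain of half-space parabolic semigroups. That is not where the restriction comes from in the paper. Theorem~\ref{thm:Rbdd H} (quoted from \cite{BM2025}) already gives the resolvent estimate \eqref{rem:resolvent} uniformly in $\lambda\in\Sigma_\epsilon$ down to the origin, so the decay is obtained by a one-line Gagliardo--Nirenberg interpolation
\[
\|\CA(\lambda)\widetilde\bF\|_{L_q}\le C\|\CA(\lambda)\widetilde\bF\|_{L_{\widetilde q}}^{1-\kappa}\|\nabla\CA(\lambda)\widetilde\bF\|_{L_{\widetilde q}}^{\kappa}\le C|\lambda|^{-(1-\kappa/2)}\|\widetilde\bF\|_{L_{\widetilde q}},
\]
and similarly for $\CB(\lambda)$, followed by the contour integral on $\Gamma_\omega$ with $\omega=t^{-1}$. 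The bound $\kappa\le 1$ is simply the admissible range of this interpolation between $L_{\widetilde q}$ and $\dot H^1_{\widetilde q}$; it has nothing to do with a special feature of the half-space and would appear in exactly the same way in $\R^N$. This does not affect the validity of your plan, but the delicacy you flag is not actually present at this step.
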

To prove \eqref{weight:0}, we multiply \eqref{linear} with $t$,
\begin{equation}\label{weight}
\left\{
\begin{aligned}
	&\pd_t (t\bu) -\Delta (t\bu) + \nabla (t\fp) + \beta \DV (\Delta (t\bQ) -a (t\bQ))\\
	&= t \bff+\bu
	& \quad&\text{in $\R^N_+$}, \enskip t \in \R_+,\\
	&\dv (t\bu)=0& \quad&\text{in $\R^N_+$}, \enskip t \in \R_+,\\
	&\pd_t (t\bQ) - \beta \bD(t\bu) - \Delta (t\bQ) + a (t\bQ) = t\bG + \bQ& \quad&\text{in $\R^N_+$}, \enskip t \in \R_+,\\
	& t\bu=0, \enskip \pd_N (t\bQ) = 0 & \quad&\text{on $\R^N_0$}, \enskip t \in \R_+,\\
	&(t\bu, t\bQ)|_{t=0}=(0, 0)& \quad&\text{in $\R^N_+$}.
\end{aligned}
\right.
\end{equation}
Let $\bU=(\bu, \bQ)$. By \eqref{local}, we have
\begin{equation}\label{weight:1}
\begin{aligned}
	&\|t(\pd_t, \nabla^2)\bU\|_{L_p(\R_+, \dot H^{0, 1}_q(\R^N_+))}
	+\|t\nabla \bQ\|_{L_p(\R_+, L_q(\R^N_+))}
	+\|t\nabla \fp\|_{L_p(\R_+, L_q(\R^N_+))}\\
	&\le C(\|\pd_t (t\bU)\|_{L_p(\R_+, \dot H^{0, 1}_q(\R^N_+))}
	+\|\bU\|_{L_p(\R_+, \dot H^{0, 1}_q(\R^N_+))}
	+\|t \nabla^2 \bU \|_{L_p(\R_+, \dot H^{0, 1}_q(\R^N_+))}\\
	&\qquad 
	+\|t\nabla \bQ\|_{L_p(\R_+, L_q(\R^N_+))}
	+\|t\nabla \fp\|_{L_p(\R_+, L_q(\R^N_+))})\\
	&\le C(
	\CF_q
	 +\|\bU\|_{L_p(\R_+, \dot H^{0, 1}_q(\R^N_+))}),
\end{aligned}
\end{equation}
therefore, we need the estimate of the lower order term $\|\bU\|_{L_p(\R_+, \dot H^{0, 1}_q(\R^N_+))}$.
Assume that $(\bU_1, \fp_1)$ and $(\bU_2, \fp_2)$ satisfy \eqref{eq:U1} and \eqref{eq:U2}, respectively.

\subsection{Estimates of $\bU_1$}
Recall that
$\bU_1=(\bu_1, \bQ_1)$ satisfies
\begin{equation}\label{eq:U1'}
\left\{
\begin{aligned}
	&\pd_t\bu_1 -\Delta \bu_1 + \nabla \fp_1 + \beta \DV (\Delta \bQ_1 -a \bQ_1)=e_T[\bff],
	\enskip \dv \bu_1=0& \quad&\text{in $\R^N_+$}, \enskip t \in \R,\\
	&\pd_t \bQ_1 - \beta \bD(\bu_1) - \Delta \bQ_1 + a \bQ_1 =e_T[\bG]& \quad&\text{in $\R^N_+$}, \enskip t \in \R,\\
	&\bu_1 = 0, \enskip \pd_N \bQ_1 = 0& \quad&\text{on $\R^N_0$}, \enskip t \in \R,
\end{aligned}
\right.
\end{equation}
and also
\[
	\widetilde \bF(t)=(e_T[\bff], \nabla e_T[\bG]),
\]
where $e_T[f]$ is the extension of $f$ defined in \eqref{extension}. 
To obtain the estimate of $\bU_1$, we recall the semigroup $\{S(t)\}_{t \ge 0}$ associated with \eqref{eq:U1'}.
As we discussed in the proof of Theorem \ref{thm:local mr}, 
$\{S(t)\}_{t \ge 0}$ satisfies
	\begin{numcases}
{S(t) \widetilde \bF =}	 
	0 &\enskip \text{for $t<0$}, \nonumber \\
	\frac{1}{2\pi i} \int_{\Gamma_\omega} e^{\lambda t}
	(\CA(\lambda) \widetilde \bF, \CB(\lambda) \widetilde \bF)\,d\lambda
	&\enskip \text{for $t>0$}, \label{sec4 semi}\\
	\widetilde \bF &\enskip \text{for $t=0$}, \nonumber
	\end{numcases}
where $\Gamma_\omega = \Gamma_\omega^+ \cup \Gamma_\omega^- \cup C_\omega$
with
\[
\begin{aligned}
	\Gamma_\omega^\pm &= \{\lambda = re^{\pm i(\pi - \epsilon)} \mid \omega < r < \infty\},\\
	C_\omega &= \{\lambda = \omega e^{i\eta} \mid -(\pi - \epsilon) < \eta < (\pi - \epsilon)\}
\end{aligned}
\]
for $\omega >0$.
We also recall that
\begin{equation}\label{semi:0}
	\|S(t) \widetilde \bF\|_{\dot H^{0, 1}_q(\R^N_+)} \le M\|\widetilde \bF\|_{L_q(\R^N_+)},
\end{equation}
and besides,
$\bU_1$ can be represented by
\begin{equation}\label{U1}
\bU_1(t) =\int^t_{-\infty} S(t-s) \widetilde \bF(s)\,ds
\end{equation}
for any $t\in \R$. 

Now, we prove the decay estimates for $\{S(t)\}_{t \ge 0}$. The Gagliardo-Nirenberg inequality and \eqref{rem:resolvent} furnish that 
\begin{equation}\label{semi:1}
\begin{aligned}
	\|\CA(\lambda) \widetilde \bF\|_{L_q(\R^N_+)}
	&\le C \|\CA(\lambda) \widetilde \bF \|_{L_{{\widetilde q}}(\R^N_+)}^{1-\kappa}
	\|\nabla \CA(\lambda) \widetilde \bF \|_{L_{{\widetilde q}}(\R^N_+)}^{\kappa}\\
	&\le C|\lambda|^{-(1-\kappa/2)}\|\widetilde \bF \|_{L_{{\widetilde q}}(\R^N_+)}
\end{aligned}
\end{equation}
for $\lambda \in \Sigma_\epsilon$ provided that $1<{\widetilde q}<q$ and $\kappa=N(1/{\widetilde q}-1/q)$.
Similarly, we have
\begin{equation}\label{semi:2}
	\|\CB(\lambda) \widetilde \bF \|_{\dot H^1_q(\R^N_+)}
	\le C|\lambda|^{-(1-\kappa/2)}\|\widetilde \bF\|_{L_{{\widetilde q}}(\R^N_+)}.
\end{equation}
Thus, it holds that
\begin{equation}\label{semi:3}
	\|S(t) \widetilde \bF\|_{\dot H^{0, 1}_q(\R^N_+)}
	\le Ct^{-\kappa/2}\|\widetilde \bF\|_{L_{{\widetilde q}}(\R^N_+)}
\end{equation}
for $t>0$
if $1<{\widetilde q}<q$ and $\kappa=N(1/{\widetilde q}-1/q)$.
In fact, 
\eqref{sec4 semi} implies that 
$S(t) \widetilde \bF = S^+ (t) \widetilde \bF + S^- (t) \widetilde \bF + S^0(t) \widetilde \bF$,
where
\[
	S^\pm (t) \widetilde \bF = \frac{1}{2\pi i} \int_{\Gamma^\pm_\omega} e^{\lambda t}
	(\CA(\lambda) \widetilde \bF, \CB(\lambda) \widetilde \bF)\,d\lambda,\quad
	S^0 (t) \widetilde \bF = \frac{1}{2\pi i} \int_{C_\omega} e^{\lambda t}
	(\CA(\lambda) \widetilde \bF, \CB(\lambda) \widetilde \bF)\,d\lambda.
\]
It follows from \eqref{semi:1} and \eqref{semi:2} that
\begin{equation*}\label{spm}
\begin{aligned}
	\|S^\pm (t) \widetilde \bF\|_{\dot H^{0, 1}_q(\R^N_+)}
	&\le C \int^\infty_\omega e^{-(\cos \epsilon) tr}r^{-(1-\kappa/2)}\,dr \|\widetilde \bF\|_{L_{{\widetilde q}}(\R^N_+)}\\
	& \le C t^{-\kappa/2} \int^\infty_{t \omega} e^{-(\cos \epsilon) s}s^{-(1-\kappa/2)}\,ds \|\widetilde \bF\|_{L_{{\widetilde q}}(\R^N_+)},
\end{aligned}
\end{equation*}
where we have set $s=tr$.
Since $\omega$ is the arbitrary positive number, we choose $\omega = t^{-1}$,
then it holds that
\begin{equation}\label{spm}
	\|S^\pm (t) \widetilde \bF\|_{\dot H^{0, 1}_q(\R^N_+)}
	\le Ct^{-\kappa/2}\|\widetilde \bF\|_{L_{{\widetilde q}}(\R^N_+)}.
\end{equation}
Note that $|e^{\lambda t}| \le e^{|\omega e^{i\eta}t|} = e^{\omega t}$. 
Then we also have
\begin{align*}
	\|S^0 (t) \widetilde \bF\|_{\dot H^{0, 1}_q(\R^N_+)}
	&\le C e^{\omega t} \int^{\pi-\epsilon}_{-(\pi-\epsilon)} |\omega e^{i\eta}|^{-(1-\kappa/2)} \omega \,d\eta \|\widetilde \bF\|_{L_{{\widetilde q}}(\R^N_+)}\\
	& = C e^{\omega t} \int^{\pi-\epsilon}_{-(\pi-\epsilon)} \omega^{\kappa/2} \,d\eta \|\widetilde \bF\|_{L_{{\widetilde q}}(\R^N_+)}.
\end{align*}
Choosing $\omega = t^{-1}$,
it holds that
\begin{equation}\label{s0}
\begin{aligned}
	\|S^0 (t) \widetilde \bF\|_{\dot H^{0, 1}_q(\R^N_+)}
	&\le C e t^{-\kappa/2} \int^{\pi-\epsilon}_{-(\pi-\epsilon)} \,d\eta \|\widetilde \bF\|_{L_{{\widetilde q}}(\R^N_+)}\\
	&\le 2 \pi C e t^{-\kappa/2} \|\widetilde \bF\|_{L_{{\widetilde q}}(\R^N_+)}.
\end{aligned}
\end{equation}
Therefore, \eqref{semi:3} follows from \eqref{spm} and \eqref{s0}.

The following estimates for $\bU_1$ follow from \eqref{semi:0} and \eqref{semi:3}.
\begin{lem}\label{lem:U1}
Let $1< p, q< \infty$. Then
\begin{equation}\label{weight:2}
	\|\bU_1\|_{L_\infty(\R_+, \dot H^{0, 1}_q(\R^N_+))} \le C\CF_q.
\end{equation}
In addition, let ${\widetilde q}$ be an index such that $1<{\widetilde q}<q$ and let $\kappa=N(1/{\widetilde q}-1/q) \le 1$.
If $1/p<\kappa/2$,
\begin{equation}\label{weight:3}
	\|\bU_1\|_{L_p(\R_+, \dot H^{0, 1}_q(\R^N_+))} \le C (\CF_q+\CF_{{\widetilde q}}).
\end{equation}
\end{lem}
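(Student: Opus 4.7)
The plan is to exploit the representation
\[
	\bU_1(t) = \int_0^t S(t-s)\widetilde\bF(s)\,ds
\]
derived in the proof of Theorem \ref{thm:local mr}, where $\widetilde\bF(s)=0$ for $s<0$. The two ingredients are the uniform semigroup bound \eqref{semi:0} from $L_q$ to $\dot H^{0,1}_q$ and the time-decay estimate \eqref{semi:3} from $L_{\widetilde q}$ to $\dot H^{0,1}_q$ with rate $t^{-\kappa/2}$, which trade spatial integrability against temporal decay. For \eqref{weight:2}, apply \eqref{semi:0} pointwise in $t$, insert the factor $1=(1+s)^{-1}(1+s)$, and use H\"older's inequality in $s$. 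Since $p>1$ gives $(1+s)^{-1}\in L_{p'}(\R_+)$, the right-hand side is dominated by a constant times $\CF_q$ uniformly in $t$, which yields \eqref{weight:2}.

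For \eqref{weight:3}, the plan is to split the time axis at $t=1$. On $(0,1)$, the same argument as for the $L_\infty$-estimate furnishes a bound independent of $t$, so the $L_p$-norm over this bounded interval contributes $C\CF_q$. On $(1,\infty)$, the convolution is further decomposed at $s=t/2$. In the far range $0<s<t/2$, one has $t-s\ge t/2$, so \eqref{semi:3} produces a factor $\le Ct^{-\kappa/2}$ that can be pulled outside the integral; H\"older in $s$ against $(1+s)^{-1}$ then bounds the remaining integral by $\CF_{\widetilde q}$. The resulting $t^{-\kappa/2}$ lies in $L_p(1,\infty)$ exactly when $1/p<\kappa/2$---the stated hypothesis---yielding a $C\CF_{\widetilde q}$-contribution. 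In the near range $t/2<s<t$, the decay kernel becomes singular, so revert to \eqref{semi:0}; crucially $(1+s)^{-1}\le 2/t$ there, so pulling out $t^{-1}$ and applying H\"older over an interval of length $t/2$ gives a pointwise bound of the form $Ct^{-1/p}\|(1+s)\widetilde\bF\|_{L_p(t/2,t;L_q)}$. Raising to the $p$-th power, integrating in $t$ over $(1,\infty)$, and applying Fubini over the set $\{t/2<s<t,\ 1<t<\infty\}$ converts the $t$-integral into $\int_s^{2s}t^{-1}\,dt=\log 2$, leaving a clean bound $\le C\CF_q$.

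The main obstacle is that neither semigroup estimate is adequate by itself on $(1,\infty)$: the decay kernel $(t-s)^{-\kappa/2}$ from \eqref{semi:3} is non-integrable as $s\to t$, ruling out sole use of the decay estimate, while \eqref{semi:0} alone produces no temporal decay and therefore no $L_p$-integrability at infinity. Splitting at $s=t/2$ resolves both defects simultaneously, and the sharp threshold $1/p<\kappa/2$ in the hypothesis is dictated precisely by the requirement $t^{-\kappa/2}\in L_p(1,\infty)$ arising from the far part of the convolution; the explicit appearance of both $\CF_q$ and $\CF_{\widetilde q}$ on the right-hand side of \eqref{weight:3} reflects exactly this necessity of using each of the two estimates on complementary portions of the $(s,t)$-region.
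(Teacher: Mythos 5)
Your argument is essentially correct and follows the paper's overall strategy (the representation of $\bU_1$ as a convolution with $S$, the bound \eqref{semi:0} for \eqref{weight:2}, and the decomposition $(0,1)$/$(1,\infty)$ and $s<t/2$ vs.\ $s>t/2$ for \eqref{weight:3}). The one place you deviate is the near-range integral $t/2<s<t$: the paper continues to use the decay estimate \eqref{semi:3}, splitting $(t-s)^{-\kappa/2}=(t-s)^{-\kappa/2(1/p'+1/p)}$, applying H\"older, changing the order of integration, and landing on $\CF_{\widetilde q}$; you instead revert to the uniform bound \eqref{semi:0}, pull out $(1+s)^{-1}\le 2/t$, apply H\"older, and then Fubini over $\{t/2<s<t,\,t>1\}$ to get $\CF_q$. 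Both routes work, and yours is arguably a touch more elementary since it avoids the somewhat fiddly exponent bookkeeping in the paper's bound for $II$.

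However, your stated motivation for this deviation is wrong: you claim the kernel $(t-s)^{-\kappa/2}$ ``becomes singular'' and is ``non-integrable as $s\to t$,'' forcing a switch to \eqref{semi:0}. Since the hypothesis imposes $\kappa\le 1$, one has $\kappa/2\le 1/2<1$, so $(t-s)^{-\kappa/2}$ is integrable up to $s=t$ and the decay estimate does apply there --- which is exactly how the paper proceeds. Your alternative is valid, but it is a choice, not a necessity; the reader should not be left with the impression that the decay estimate fails near the diagonal.
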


\begin{proof}
First, we prove \eqref{weight:2}.
Using \eqref{semi:0} for \eqref{U1} and applying H\"older's inequality, we have 
\begin{align*}
	\|\bU_1(t)\|_{\dot H^{0, 1}_q(\R^N_+)} 
	&\le M \int^t_{-\infty}\|\widetilde \bF(s)\|_{L_q(\R^N_+)}\,ds
	\le M \int^\infty_0 \|\bF(s)\|_{L_q(\R^N_+)}\,ds\\
	&\le C \|(1+s) \bF\|_{L_p(\R_+, L_q(\R^N_+))}=C\CF_q,
\end{align*}
which shows \eqref{weight:2}.

Second, we prove \eqref{weight:3}.
Set
\[
	\|\bU_1\|_{L_p((1, \infty), \dot H^{0, 1}_q(\R^N_+))}^p
	\le C (I+II),
\]
where
\begin{align*}
	I&=\int_1^\infty \left( \int^{t/2}_{-\infty} \|S(t-s) \widetilde \bF\|_{\dot H^{0, 1}_q(\R^N_+)}\,ds\right)^p\,dt,\\
	II&=\int_1^\infty \left( \int^t_{t/2} \|S(t-s) \widetilde \bF\|_{\dot H^{0, 1}_q(\R^N_+)}\,ds\right)^p\,dt.
\end{align*}
We consider the estimates of $I$ and $II$ by \eqref{semi:3}.
Noting that $(t-s) \ge t/2$ if $s<t/2$, 
we have
\begin{align*}
	I&\le C\int_1^\infty \left( \int^{t/2}_{-\infty} (t-s)^{-\frac{\kappa}{2}}\|\widetilde \bF\|_{L_{{\widetilde q}}(\R^N_+)}\,ds\right)^p\,dt\\
	&\le C\int_1^\infty t^{-\frac{p\kappa}{2}}\left( \int^{t/2}_{-\infty} \|\widetilde \bF\|_{L_{{\widetilde q}}(\R^N_+)}\,ds\right)^p\,dt\\
	&\le C\int_1^\infty t^{-\frac{p\kappa}{2}} \left( \int^{t/2}_{-\infty} (1+|s|)^{-p'}\,ds \right)^{p/p'} \,dt \|(1+|s|)\widetilde \bF\|_{L_p(\R, L_{{\widetilde q}}(\R^N_+))}^p\\
	&\le C\int_1^\infty t^{-\frac{p\kappa}{2}} \,dt \|(1+|s|)\widetilde \bF\|_{L_p(\R, L_{{\widetilde q}}(\R^N_+))}^p\\
	&\le C\|(1+s) \bF\|_{L_p(\R_+, L_{{\widetilde q}}(\R^N_+))}^p = C\CF_{{\widetilde q}}^p
	\end{align*}
provided that $1/p<\kappa/2$.
Furthermore,
\begin{align*}
	II&\le C\int_1^\infty \left( \int_{t/2}^t (t-s)^{-\frac{\kappa}{2}(\frac{1}{p'}+\frac{1}{p})}\|\widetilde \bF\|_{L_{{\widetilde q}}(\R^N_+)}\,ds\right)^p\,dt\\
	&\le C\int_1^\infty \left( \int_{t/2}^t (t-s)^{-\frac{\kappa}{2}}\,ds\right)^{p/p'}
	\left(\int^t_{t/2} (t-s)^{-\frac{\kappa}{2}}\|\widetilde \bF\|_{L_{{\widetilde q}}(\R^N_+)}^p\,ds\right)\,dt\\
	&\le C\int_1^\infty (t/2)^{\left(1-\frac{\kappa}{2}\right)\frac{p}{p'}}
	\left(\int^t_{t/2} (t-s)^{-\frac{\kappa}{2}}\|\widetilde \bF\|_{L_{{\widetilde q}}(\R^N_+)}^p\,ds\right)\,dt\\
	&\le C\int_{1/2}^\infty \int^{2s}_s t^{\left(1-\frac{\kappa}{2}\right)\frac{p}{p'}}
	(t-s)^{-\frac{\kappa}{2}}\,dt \|\widetilde \bF\|_{L_{{\widetilde q}}(\R^N_+)}^p\,ds\\
	&\le C\int_{1/2}^\infty s^{\left(1-\frac{\kappa}{2}\right)(p-1)+1-\frac{\kappa}{2}}
	\|\widetilde \bF\|_{L_{{\widetilde q}}(\R^N_+)}^p\,ds\\
	&\le C\|(1+s) \bF\|_{L_p(\R_+, L_{{\widetilde q}}(\R^N_+))}^p=C\CF_{{\widetilde q}}^p.
\end{align*}
Therefore, we have 
\begin{equation}\label{weight:4}
\|\bU_1\|_{L_p((1, T), \dot H^{0, 1}_q(\R^N_+))}
\le C \CF_{{\widetilde q}} 
\end{equation}
if $1<{\widetilde q}<q$, $\kappa=N(1/{\widetilde q}-1/q)$, and $1/p<\kappa/2$.

In addition, \eqref{semi:0} furnishes that
\begin{align*}
	\|\bU_1\|_{L_p((0, 1), \dot H^{0, 1}_q(\R^N_+))}^p
	&\le M \int^1_0 \left(\int^t_{-\infty} \|\widetilde \bF(s)\|_{L_q(\R^N_+)}\,ds\right)^p\,dt\\
	&\le M \int^1_0 \left(\int^\infty_{-\infty} (1+|s|)\|\widetilde \bF(s)\|_{L_q(\R^N_+)}\,ds\right)^p\,dt\\
	&\le M\|(1+s) \bF\|_{L_p(\R_+, L_q(\R^N_+))}^p=M\CF_q^p,
\end{align*}
together with \eqref{weight:4}, then we obtain \eqref{weight:3}.
\end{proof}

\subsection{Estimates of $\bU_2$}
Since $(\bu_1(0), \bQ_1(0))=(0, 0)$ by \eqref{U1(0)}, $\bU_2=(\bu_2, \bQ_2)$ satisfies
\begin{equation*}
\left\{
\begin{aligned}
	&\pd_t\bu_2 -\Delta \bu_2 + \nabla \fp_2 + \beta \DV (\Delta \bQ_2 -a \bQ_2)=0,
	\enskip \dv \bu_2=0& \quad&\text{in $\R^N_+$}, \enskip t \in \R_+,\\
	&\pd_t \bQ_2 - \beta \bD(\bu_2) - \Delta \bQ_2 + a \bQ_2 =0& \quad&\text{in $\R^N_+$}, \enskip t \in \R_+,\\
	&\bu_2 = 0, \enskip \pd_N \bQ_2 = 0 & \quad&\text{on $\R^N_0$}, \enskip t \in \R_+,\\
	&(\bu_2, \bQ_2)|_{t=0}=(\bu_0, \bQ_0)& \quad&\text{in $\R^N_+$}.
\end{aligned}
\right.
\end{equation*}
Let us consider the estimate for $\bU_2$.
\begin{lem}\label{lem:U2}
Let $1< p, q< \infty$. Then
\[
	\|\bU_2\|_{L_\infty(\R_+, \dot H^{0, 1}_q(\R^N_+))} \le C\|(\bu_0, \bQ_0)\|_{\dot H^{0, 1}_q(\R^N_+)}.
\]
In addition, let ${\widetilde q}$ be an index $1<{\widetilde q}<q$ and let $\kappa=N(1/{\widetilde q}-1/q) \le 1$.
If $1/p<\kappa/2$,
\[
	\|\bU_2\|_{L_p(\R_+, \dot H^{0, 1}_q(\R^N_+))} 
	\le C \sum_{r \in \{q, {\widetilde q}\}} \|(\bu_0, \bQ_0)\|_{\dot H^{0, 1}_r(\R^N_+)}.
\]
\end{lem}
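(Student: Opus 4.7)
The plan is to exploit the fact that $\bU_2(t) = T(t)(\bu_0, \bQ_0)$, where $\{T(t)\}_{t\ge 0}$ is the analytic semigroup generated by $\CA_q$ on $\CX_q(\R^N_+)$ established before Lemma~\ref{lem:initial}. For the $L_\infty$ bound, standard analytic semigroup theory combined with the resolvent estimate of Corollary~\ref{cor:resolvent} furnishes the uniform bound $\|T(t)\bU_0\|_{\dot H^{0,1}_q(\R^N_+)} \le C\|\bU_0\|_{\dot H^{0,1}_q(\R^N_+)}$ for all $t \ge 0$, which yields the first estimate directly.

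For the $L_p$ estimate, I first derive a decay bound parallel to \eqref{semi:3}. Representing the semigroup via the Dunford integral
$$T(t)\bU_0 = \frac{1}{2\pi i}\int_{\Gamma_\omega} e^{\lambda t}(\lambda - \CA_q)^{-1}\bU_0\, d\lambda,$$
I observe that by Theorem~\ref{thm:Rbdd H} the resolvent acts as $(\lambda - \CA_q)^{-1}\bU_0 = (\CA(\lambda), \CB(\lambda))(\bu_0, \nabla\bQ_0)$, so $T(t)\bU_0$ coincides with $S(t)$ applied to the data $(\bu_0, \nabla \bQ_0)$. Consequently the Gagliardo--Nirenberg argument leading to \eqref{semi:1}--\eqref{semi:3} applies verbatim and produces
$$\|T(t)\bU_0\|_{\dot H^{0,1}_q(\R^N_+)} \le Ct^{-\kappa/2}\|\bU_0\|_{\dot H^{0,1}_{\widetilde q}(\R^N_+)} \quad (t > 0),$$
whenever $1 < \widetilde q < q$ and $\kappa = N(1/\widetilde q - 1/q) \le 1$.

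To conclude, I would split the time integral as
$$\|\bU_2\|_{L_p(\R_+, \dot H^{0,1}_q(\R^N_+))}^p = \int_0^1 \|T(t)\bU_0\|_{\dot H^{0,1}_q(\R^N_+)}^p\,dt + \int_1^\infty \|T(t)\bU_0\|_{\dot H^{0,1}_q(\R^N_+)}^p\,dt.$$
The first piece is controlled by $C\|\bU_0\|_{\dot H^{0,1}_q(\R^N_+)}^p$ via the uniform bound. The second, by the decay estimate, is bounded by $C\|\bU_0\|_{\dot H^{0,1}_{\widetilde q}(\R^N_+)}^p \int_1^\infty t^{-p\kappa/2}\,dt$, and this integral converges precisely under the subcritical assumption $1/p < \kappa/2$. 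Summing gives the claimed $L_p$ estimate.

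No serious obstacle arises beyond the identification $T(t)\bU_0 = S(t)(\bu_0, \nabla\bQ_0)$ dictated by the construction of the solution operators $\CA(\lambda), \CB(\lambda)$ in Theorem~\ref{thm:Rbdd H}; once that is noted, the whole argument is a direct recycling of the scheme carried out for $\bU_1$ in Lemma~\ref{lem:U1}. The sharp range $1/p < \kappa/2$ is inherited from the convergence of $\int_1^\infty t^{-p\kappa/2}\,dt$, matching the condition already imposed in Lemma~\ref{lem:U1}.
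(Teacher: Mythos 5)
Your proposal is correct and follows essentially the same route as the paper: represent $\bU_2(t)=T(t)\bU_0$, observe that this is just $S(t)$ applied to $(\bu_0,\nabla\bQ_0)$, invoke the uniform bound \eqref{semi:4} and the Gagliardo--Nirenberg-based decay bound \eqref{semi:5}, and split the time integral at $t=1$ so that the decay estimate controls the tail when $1/p<\kappa/2$. The paper's proof is identical in structure, merely saying ``by the same manner as in the proof of \eqref{semi:0} and \eqref{semi:3}'' where you make the identification $T(t)\bU_0=S(t)(\bu_0,\nabla\bQ_0)$ explicit.
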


\begin{proof}
Let $\bU_0=(\bu_0, \bQ_0)$.
Note that $\bU_2$ is represented by 
\[
	\bU_2(t)=T(t)\bU_0
\]
with
\[
	T(t)(\bff, \bG)=\frac{1}{2\pi i} \int_{\Gamma_\omega} e^{\lambda t}(\CA(\lambda)(\bff, \nabla \bG), 
	\CB(\lambda)(\bff, \nabla \bG))\,d\lambda
\]
for $t>0$, where $\Gamma_\omega$ is defined in \eqref{int path} for $\omega>0$.
By the same manner as in the proof of \eqref{semi:0} and \eqref{semi:3}, we have
\begin{align}
	&\|\bU_2(t)\|_{\dot H^{0, 1}_q(\R^N_+)} \le C\|\bU_0\|_{\dot H^{0, 1}_q(\R^N_+)}, \label{semi:4}\\
	&\|\bU_2(t)\|_{\dot H^{0, 1}_q(\R^N_+)} \le Ct^{-\kappa/2} \|\bU_0\|_{\dot H^{0, 1}_{{\widetilde q}}(\R^N_+)} \label{semi:5}
\end{align}
if $1<{\widetilde q}<q$ and $\kappa=N(1/{\widetilde q}-1/q)$.
Here, \eqref{semi:4} implies that
\begin{align}
	&\|\bU_2(t)\|_{L_\infty(\R_+, \dot H^{0, 1}_q(\R^N_+))} 
	\le C \|\bU_0\|_{\dot H^{0, 1}_q(\R^N_+)}, \nonumber \\
	&\|\bU_2(t)\|_{L_p((0, 1), \dot H^{0, 1}_q(\R^N_+))} 
	\le C \|\bU_0\|_{\dot H^{0, 1}_q(\R^N_+)}.\label{U2:1}
\end{align}
Furthermore, 
\eqref{semi:5} furnishes that
\begin{equation}\label{U2:2}
	\|\bU_2(t)\|_{L_p((1, \infty), \dot H^{0, 1}_q(\R^N_+))} 
	\le C \|\bU_0\|_{\dot H^{0, 1}_{{\widetilde q}}(\R^N_+)}
\end{equation}
if $1/p<\kappa/2$.
Thus, by \eqref{U2:1} and \eqref{U2:2} we have
\begin{equation*}\label{U2:3}
	\|\bU_2(t)\|_{L_p(\R_+, \dot H^{0, 1}_q(\R^N_+))} 
	\le C \sum_{r \in \{q, {\widetilde q}\}}\|\bU_0\|_{\dot H^{0, 1}_r(\R^N_+)},
	\end{equation*}
which completes the proof of
Lemma \ref{lem:U2}.
\end{proof}

\subsection{Proof of Theorem \ref{thm:weight}}
Recall that $\bU=\bU_1+\bU_2$.
Lemma \ref{lem:U1} and Lemma \ref{lem:U2} furnish that
\[
	\|\bU\|_{L_\infty(\R_+, \dot H^{0, 1}_q(\R^N_+))} \le C(\|\bU_0\|_{\dot H^{0, 1}_q(\R^N_+)}+\CF_q)
\]
and 
\begin{equation}\label{weight:5}
	\|\bU\|_{L_p(\R_+, \dot H^{0, 1}_q(\R^N_+))} 
	\le C \sum_{r \in \{q, {\widetilde q}\}} (\|\bU_0\|_{\dot H^{0, 1}_r(\R^N_+)}+\CF_r)
\end{equation}
if $1<{\widetilde q}<q$, $\kappa=N(1/{\widetilde q}-1/q)$, and $1/p<\kappa/2$, which prove \eqref{low infty} and \eqref{low lp}.
Combining \eqref{local}, \eqref{weight:1}, and \eqref{weight:5},
we have \eqref{weight:0}.

\section{Global well-posedness}\label{sec:global}
In this section, let us prove Theorem \ref{thm:global}.
Let $\bU=(\bu, \bQ)$ and $\bU_0=(\bu_0, \bQ_0)$.
Hereafter, we may assume that $0<\sigma<1$.
Theorem \ref{thm:global} is proved by the Banach fixed point argument. 
The uniqueness of the solutions follows from the uniqueness of the fixed points; therefore, we focus on the existence of solutions.

Let $N \ge 2$ and $0<\theta<1/2$. Note that
the assumption \eqref{condi} implies that 
\[
	\frac{1}{q_0}=\frac{1}{q_1}+\frac{1}{q_2}, \quad N\left(\frac{1}{q_1}-\frac{1}{q_2}\right)=1, \quad \frac{1-\theta}{q_1}+\frac{\theta}{q_2}=\frac{1}{N}, \quad 1<q_0<q_1<N<q_2<\infty,
\]
where $q_0=N/(1+2\theta) \ge 2/(1+2\theta) >1$ follows from $N \ge 2$ and $0<\theta<1/2$.
Recall that
\begin{align*}
	E(\bU)&=\sum^2_{i=1}(\|(1+t)(\pd_t, \nabla^2)\bU\|_{L_p(\R_+, \dot H^{0, 1}_{q_i}(\R^N_+))}+\|(1+t)\nabla \bQ\|_{L_p(\R_+, L_{q_i}(\R^N_+)}\\
	&\quad +\|\bU\|_{L_p(\R_+, \dot H^{0, 1}_{q_i}(\R^N_+))}
	+\|\bU\|_{L_\infty(\R_+, \dot H^{0, 1}_{q_i}(\R^N_+))}).
\end{align*}
Define the underlying space as
\begin{align*}
	\CI_{\sigma} = \{\bU \mid & \pd_t \bu \in \bigcap^2_{i=1} L_p(\R_+, L_{q_i}(\R^N_+)), \enskip \bu \in \bigcap^2_{i=1} L_p(\R_+, \dot H^2_{q_i}(\R^N_+)),\\
	&\pd_t \bQ \in \bigcap^2_{i=1} L_p (\R_+, \dot H^1_{q_i}(\R^N_+; \bS_0)), \enskip
	\bQ \in \bigcap^2_{i=1} L_p (\R_+, \dot H^1_{q_i}(\R^N_+; \bS_0) \cap \dot H^3_{q_i}(\R^N_+; \bS_0)),\\
	& \bU|_{t=0} = \bU_0, \enskip E(\bU) \le \sigma \}.
\end{align*}
Given $\bU \in \CI_{\sigma}$, we assume that $\bV = (\bv, \bP)$ satisfies 
\begin{equation}\label{eq:fixed point}
\left\{
\begin{aligned}
	&\pd_t\bv -\Delta \bv + \nabla \fp + \beta \DV (\Delta \bP -a \bP)=\bff(\bU),
	\enskip \dv \bv=0& \quad&\text{in $\R^N_+$}, \enskip t \in \R_+,\\
	&\pd_t \bP - \beta \bD(\bv) - \Delta \bP + a \bP =\bG(\bU)& \quad&\text{in $\R^N_+$}, \enskip t \in \R_+,\\
	&\bv = 0, \enskip \pd_N \bP = 0& \quad&\text{on $\R^N_0$}, \enskip t \in \R_+,\\
	&\bV|_{t=0}=\bU_0& \quad&\text{in $\R^N_+$}.
\end{aligned}
\right.
\end{equation}
Let us prove $\bV \in \CI_{\sigma}$.
To achieve that, we show
\begin{equation}\label{est:fG}
\begin{aligned}
	\sum_{r\in\{q_0, q_1, q_2\}} (\|(1+t) \bff(\bU)\|_{L_p(\R_+, L_r(\R^N_+))}+\|(1+t) \bG(\bU)\|_{L_p(\R_+, \dot H^1_r(\R^N_+))})
	\le C \sigma^2
\end{aligned}
\end{equation} 
by using the following lemma proved by \cite{MPS}.
\begin{lem}\label{sup}
Let $\ell =0, 1$ and $N \ge 2$. Let $1 < q_1<N<q_2 < \infty$, and let $0 < \theta <1$.
Assume that
\[
	\frac{1-\theta}{q_1}+\frac{\theta}{q_2}=\frac{1}{N}.
\]
Then
\[
	\|\nabla^\ell \bv\|_{L_\infty(\R^N_+)} \le C \|\nabla^{\ell+1} \bv\|_{L_{q_1}(\R^N_+)}^{1-\theta} \|\nabla^{\ell+1} \bv\|_{L_{q_2}(\R^N_+)}^\theta 
\]
for $\bv \in \dot{H}^{\ell+1}_{q_1}(\R^N_+)^N \cap \dot{H}^{\ell+1}_{q_2}(\R^N_+)^N$.
\end{lem}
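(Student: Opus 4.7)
The plan is to reduce the estimate on $\R^N_+$ to the corresponding Gagliardo--Nirenberg-type inequality on $\R^N$ and then to prove the whole-space inequality by a Littlewood--Paley argument combined with Bernstein's inequality.

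First I would apply an extension operator $E$ obtained by Stein's higher-order reflection across $\{x_N=0\}$, that is, defining $E\bv(x',x_N) = \bv(x',x_N)$ for $x_N>0$ and $E\bv(x',x_N)=\sum_{k=1}^{\ell+2} a_k \bv(x',-k x_N)$ for $x_N<0$, with coefficients $a_k$ chosen so that all derivatives up to order $\ell+1$ match across the boundary. This yields a linear map bounded from $\dot H^{\ell+1}_{q}(\R^N_+)$ into $\dot H^{\ell+1}_{q}(\R^N)$ for any $1<q<\infty$; in particular, $E$ is simultaneously bounded for $q=q_1$ and $q=q_2$. Since $\|\nabla^\ell \bv\|_{L_\infty(\R^N_+)}\le \|\nabla^\ell E\bv\|_{L_\infty(\R^N)}$, it suffices to prove the inequality on $\R^N$.

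On $\R^N$ I would decompose $\nabla^\ell \bv = \sum_{j\in\Z}\dot\Delta_j \nabla^\ell \bv$ using the homogeneous Littlewood--Paley projectors $\{\dot\Delta_j\}$ introduced earlier. By Bernstein's inequality and the Fourier-multiplier identity $\dot\Delta_j \nabla^\ell = 2^{-j}\,T_j \dot\Delta_j \nabla^{\ell+1}$, where $T_j$ has a smooth symbol of order $0$ supported in a dyadic annulus (hence bounded on $L_q$ uniformly in $j$), one obtains
\[
\|\dot\Delta_j \nabla^\ell \bv\|_{L_\infty(\R^N)} \le C\, 2^{j(N/q_i - 1)}\, \|\nabla^{\ell+1}\bv\|_{L_{q_i}(\R^N)}, \qquad i=1,2.
\]
Because $N/q_1-1>0$ and $N/q_2-1<0$, I would use the bound with $q_1$ for $j\le j_0$ and with $q_2$ for $j>j_0$; the two resulting geometric series converge and give
\[
\|\nabla^\ell \bv\|_{L_\infty(\R^N)} \le C\bigl(2^{j_0(N/q_1-1)}\|\nabla^{\ell+1}\bv\|_{L_{q_1}} + 2^{j_0(N/q_2-1)}\|\nabla^{\ell+1}\bv\|_{L_{q_2}}\bigr).
\]
Optimizing over $j_0\in\Z$ (i.e.\ equating the two terms) yields exactly the exponents $(1-\theta,\theta)$ dictated by the scaling relation $\tfrac{1-\theta}{q_1}+\tfrac{\theta}{q_2}=\tfrac{1}{N}$.

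The main obstacle I anticipate is the extension step: homogeneous Sobolev spaces are defined modulo polynomials, so the standard Stein extension must be handled with care. I would circumvent this by applying the reflection directly to a representative of $\bv$ and then observing that only $\nabla^{\ell+1}E\bv$ enters the estimates, which is insensitive to the polynomial ambiguity. Everything else is a clean Bernstein/frequency-splitting optimization that produces the exponents $1-\theta$ and $\theta$ automatically from the dimensional balance $\frac{1-\theta}{q_1}+\frac{\theta}{q_2}=\frac{1}{N}$.
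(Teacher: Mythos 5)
The paper gives no proof of this lemma: it only cites the unpublished preprint \cite{MPS} of Mucha, Piasecki, and Shibata (with no further details in the bibliography), so there is no in-paper argument to compare against. Your proposal therefore has to be assessed on its own.

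Your argument is correct. The higher-order reflection across $\{x_N=0\}$, with the $\ell+2$ coefficients $a_k$ determined by the Vandermonde system $\sum_k a_k(-k)^m=1$ for $m=0,\dots,\ell+1$, defines a single linear operator that is simultaneously bounded from $\dot H^{\ell+1}_q(\R^N_+)$ to $\dot H^{\ell+1}_q(\R^N)$ for every $q$; this is exactly what is needed, since the definition of $\dot H^{\ell+1}_q(\R^N_+)$ as a restriction space only guarantees near-optimal extensions for each exponent separately, not a common one for $q_1$ and $q_2$. Since $q_2>N$, the $L_{q_2}$ piece of the hypothesis gives enough local regularity for the trace matching at $x_N=0$ to make sense, so $\nabla^{\ell+1}E\bv$ is a function with no singular part on the hyperplane. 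On $\R^N$ the Bernstein bound $\|\dot\Delta_j\nabla^\ell E\bv\|_{L_\infty}\le C\,2^{j(N/q_i-1)}\|\nabla^{\ell+1}E\bv\|_{L_{q_i}}$ is correct for $i=1,2$; because $q_1<N<q_2$ the two exponents $N/q_i-1$ have opposite signs, the split sum converges geometrically on both sides, and equating the two contributions at $2^{j_0}$ yields exactly $\theta=\frac{1/q_1-1/N}{1/q_1-1/q_2}$ and $1-\theta$, consistent with $\frac{1-\theta}{q_1}+\frac{\theta}{q_2}=\frac{1}{N}$. Absolute convergence of the dyadic series also settles the representative issue in $\CS'/\CP$: it singles out the continuous bounded representative of $\nabla^\ell E\bv$ for which the left-hand side is meaningful. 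This is a clean, standard frequency-splitting proof of the Gagliardo--Nirenberg endpoint and fills the gap left by the paper's citation.
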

Let us consider the estimate of $\bff(\bU)$.
For $i=1, 2$, it holds by Lemma \ref{sup} that
\begin{align*}
	\|\bu \cdot \nabla \bu\|_{L_{q_i}(\R^N_+)} 
	&\le C\|\bu\|_{L_{q_i}(\R^N_+)}\|\nabla \bu\|_{L_\infty(\R^N_+)}\\
	&\le C\|\bu\|_{L_{q_i}(\R^N_+)}\|\nabla^2 \bu\|_{L_{q_1}(\R^N_+)}^{1-\theta} \|\nabla^2 \bu\|_{L_{q_2}(\R^N_+)}^{\theta}\\
	&\le C\|\bu\|_{L_{q_i}(\R^N_+)}(\|\nabla^2 \bu\|_{L_{q_1}(\R^N_+)} + \|\nabla^2 \bu\|_{L_{q_2}(\R^N_+)})
\end{align*}
if $(1-\theta)/q_1 + \theta/q_2 = 1/N$.
Then
\begin{align*}
	&\|(1+t)\bu \cdot \nabla \bu\|_{L_p(\R_+, L_{q_i}(\R^N_+))}\\
	&\le C\|\bu\|_{L_\infty(\R_+, L_{q_i}(\R^N_+))} (\|(1+t)\nabla^2 \bu\|_{L_p(\R_+, L_{q_1}(\R^N_+))} + \|(1+t)\nabla^2 \bu\|_{L_p(\R_+, L_{q_2}(\R^N_+))})\\
	&\le CE(\bU)^2.
\end{align*}
Note that $1/q_0=1/q_1+1/q_2$ and $N(1/q_1-1/q_2)=1$.
H\"older's inequality and Sobolev's embedding theorem imply that
\begin{align*}
	\|\bu \cdot \nabla \bu\|_{L_{q_0}(\R^N_+)} 
	&\le C\|\bu\|_{L_{q_1}(\R^N_+)}\|\nabla \bu\|_{L_{q_2}(\R^N_+)}\\
	&\le C\|\bu\|_{L_{q_1}(\R^N_+)}\|\nabla^2 \bu\|_{L_{q_1}(\R^N_+)},
\end{align*}
then
\begin{align*}
	&\|(1+t)\bu \cdot \nabla \bu\|_{L_p(\R_+, L_{q_0}(\R^N_+))}\\
	&\le C\|\bu\|_{L_\infty(\R_+, L_{q_1}(\R^N_+))} \|(1+t)\nabla^2 \bu\|_{L_p(\R_+, L_{q_1}(\R^N_+))}\\
	&\le CE(\bU)^2.
\end{align*}
Note that other terms of $\bff(\bU)$ are written by $\bQ^k P(\bQ)$ with $k=0, 1, 2, 3$, where $P(\bQ) = (\nabla \bQ \nabla^2 \bQ, \nabla^3 \bQ \bQ, \bQ \nabla \bQ)$.
{It holds by Lemma \ref{sup} and Young's inequality that
\[
	\|\bQ\|_{L_\infty(\R^N_+)} \le C(\|\nabla \bQ\|_{L_{q_1}(\R^N_+)} + \|\nabla \bQ\|_{L_{q_2}(\R^N_+)}),
\]
then we have
\begin{align*}
	&\|(1+t) \bQ^k P(\bQ)\|_{L_p(\R_+, L_q(\R^N_+))} \\
	&\enskip \le \|\bQ\|_{L_\infty(\R_+, L_\infty(\R^N_+))}^k \|(1+t) P(\bQ)\|_{L_p(\R_+, L_q(\R^N_+))}\\
	&\enskip \le C(\|\nabla \bQ\|_{L_\infty(\R_+, L_{q_1}(\R^N_+))} ^k+ \|\nabla \bQ\|_{L_\infty(\R_+, L_{q_2}(\R^N_+))}^k) \|(1+t) P(\bQ)\|_{L_p(\R_+, L_q(\R^N_+))}.
\end{align*}
Therefore,
it is sufficient to consider the estimate of $\|(1+t) P(\bQ)\|_{L_p(\R_+, L_q(\R^N_+))}$.
It follows from the same manner as $\bu \cdot \nabla \bu$ that
\begin{align*}
	&\|(1+t)\nabla \bQ \nabla^2 \bQ\|_{L_p(\R_+, L_{q_i}(\R^N_+))}\\
	&\le C\|\nabla \bQ\|_{L_\infty(\R_+, L_{q_i}(\R^N_+))} (\|(1+t)\nabla^3 \bQ\|_{L_p(\R_+, L_{q_1}(\R^N_+))} + \|(1+t)\nabla^3 \bQ\|_{L_p(\R_+, L_{q_2}(\R^N_+))})\\
	&\le CE(\bU)^2,\\
	&\|(1+t)\nabla \bQ \nabla^2 \bQ\|_{L_p(\R_+, L_{q_0}(\R^N_+))}\\
	&\le C\|\nabla \bQ\|_{L_\infty(\R_+, L_{q_1}(\R^N_+))} \|(1+t)\nabla^3 \bQ\|_{L_p(\R_+, L_{q_1}(\R^N_+))}\\
	&\le CE(\bU)^2
\end{align*}
if $1/q_0=1/q_1+1/q_2$, $N(1/q_1-1/q_2)=1$, and $(1-\theta)/q_1+\theta/q_2=1/N$.
Furthermore, for $i=1, 2$, it holds by Lemma \ref{sup} that
\[
	\|\nabla^3 \bQ \bQ\|_{L_{q_i}(\R^N_+)} \le C \|\nabla^3 \bQ\|_{L_{q_i}(\R^N_+)} \|\bQ\|_{L_\infty(\R^N_+)}
	 \le C \|\nabla^3 \bQ\|_{L_{q_i}(\R^N_+)} (\|\nabla \bQ\|_{L_{q_1}(\R^N_+)}+\|\nabla \bQ\|_{L_{q_2}(\R^N_+)})
\] provided that $(1-\theta)/q_1+\theta/q_2=1/N$, 
then
\begin{align*}
	&\|(1+t)\nabla^3 \bQ \bQ\|_{L_p(\R_+, L_{q_i}(\R^N_+))}\\
	&\le C\|(1+t)\nabla^3 \bQ\|_{L_p(\R_+, L_{q_i}(\R^N_+))} (\|\nabla \bQ\|_{L_\infty(\R_+, L_{q_1}(\R^N_+))}+\|\nabla \bQ\|_{L_\infty(\R_+, L_{q_2}(\R^N_+))})\\
	&\le CE(\bU)^2.
\end{align*}
By the same way as the estimate of $\bu \cdot \nabla \bu$, we have
\begin{align*}
	&\|(1+t)\nabla^3 \bQ \bQ\|_{L_p(\R_+, L_{q_0}(\R^N_+))}\\
	&\le C \|(1+t)\nabla^3 \bQ\|_{L_p(\R_+, L_{q_1}(\R^N_+))} \|\nabla \bQ\|_{L_\infty(\R_+, L_{q_1}(\R^N_+))}\\
	&\le CE(\bU)^2.
\end{align*}
Repeating the same manner as before, Lemma \ref{sup} gives us
\[
	\|\bQ \nabla \bQ\|_{L_{q_i}(\R^N_+)} \le C \|\nabla \bQ\|_{L_{q_i}(\R^N_+)} \|\bQ\|_{L_\infty(\R^N_+)}
	 \le C \|\nabla \bQ\|_{L_{q_i}(\R^N_+)} (\|\nabla \bQ\|_{L_{q_1}(\R^N_+)}+\|\nabla \bQ\|_{L_{q_2}(\R^N_+)})
\]
for $i=1, 2$
provided that $(1-\theta)/q_1+\theta/q_2=1/N$, then we have
\[
\begin{aligned}
	&\|(1+t) \bQ \nabla \bQ\|_{L_p(\R_+, L_{q_i}(\R^N_+))}\\
	&\le C \|(1+t)\nabla \bQ\|_{L_p(\R_+, L_{q_i}(\R^N_+))} (\|\nabla \bQ\|_{L_\infty(\R_+, L_{q_1}(\R^N_+))}+\|\nabla \bQ\|_{L_\infty(\R_+, L_{q_2}(\R^N_+))})\\
	&\le C E(\bU)^2
\end{aligned}
\]
for $i=1, 2$.
Furthermore, H\"older's inequality, Sobolev's embedding theorem, and Lemma \ref{sup} imply that
\begin{align*}
	&\|(1+t) \bQ \nabla \bQ\|_{L_p(\R_+, L_{q_0}(\R^N_+))}\\
	&\le C \|(1+t)\bQ\|_{L_p(\R_+, L_{q_2}(\R^N_+))} \|\nabla \bQ\|_{L_\infty(\R_+, L_{q_1}(\R^N_+))}\\
	&\le C \|(1+t)\nabla \bQ\|_{L_p(\R_+, L_{q_1}(\R^N_+))} \|\nabla \bQ\|_{L_\infty(\R_+, L_{q_1}(\R^N_+))}\\
	&\le C E(\bU)^2
\end{align*}
if $1/q_0=1/q_1+1/q_2$, $N(1/q_1-1/q_2)=1$, and $(1-\theta)/q_1+\theta/q_2=1/N$.
Since we can estimate $\bG(\bU)$
in the same manner,
we have 
\begin{equation}\label{est:nonlinear}
\begin{aligned}
	&\sum_{r\in\{q_0, q_1, q_2\}} \|(1+t) \bff(\bU)\|_{L_p(\R_+, L_r(\R^N_+))}
	\le C \sum_{k=2}^5 E(\bU)^k,\\
	&\sum_{r\in\{q_0, q_1, q_2\}} \|(1+t) \bG(\bU)\|_{L_p(\R_+, \dot H^1_r(\R^N_+))} 
	\le C \sum_{k=2}^3 E(\bU)^k.
\end{aligned}
\end{equation}
It holds by $\bU \in \CI_{\sigma}$ and $0 < \sigma <1$ that \eqref{est:fG}.

Now, we can apply Theorem \ref{thm:local mr} to \eqref{eq:fixed point}, then 
we observe that there exists a solution $(\bv, \bP, \fp)$ of \eqref{eq:fixed point} with
\begin{align*}
	\pd_t \bv &\in \bigcap^2_{i=1} L_p (\R_+, L_{q_i}(\R^N_+)^N), & 
	\bv&\in L_p (\R_+, \dot H^2_{q_i}(\R^N_+)^N),\\
	\pd_t \bP &\in \bigcap^2_{i=1} L_p (\R_+, \dot H^1_{q_i}(\R^N_+; \bS_0)), & 
	\bP &\in L_p (\R_+, \dot H^1_{q_i}(\R^N_+; \bS_0) \cap \dot H^3_{q_i}(\R^N_+; \bS_0)),\\
	\nabla \fp &\in \bigcap^2_{i=1} L_p (\R_+, L_{q_i}(\R^N_+)^N). & &
\end{align*}
Theorem \ref{thm:weight} works for $q_0$, $q_1$, $q_2$, and $p$ satisfying \eqref{condi}.
In fact, 
$q_0$, $q_1$, and $p$ satisfy $1<q_0<q_1$, $N(1/q_0 - 1/q_1) = \theta$, and $1/p < \theta/2$ for $0<\theta<1/2$; therefore, 
Theorem \ref{thm:weight} holds for $(\kappa, {\widetilde q}, q)=(\theta, q_0, q_1)$.
Furthermore, since $q_1$, $q_2$, and $p$ satisfy $1<q_1<q_2$, $N(1/q_1 - 1/q_2) = 1$, and $1/p < 1/2$, Theorem \ref{thm:weight} works for $(\kappa, {\widetilde q}, q)=(1, q_1, q_2)$.
Therefore, Theorem \ref{thm:weight}, together with \eqref{est:fG} and \eqref{smallness condi1}, enables us to obtain 
\begin{equation*}
\begin{aligned}
	E(\bV) &\le C\Big(\sum^2_{i=1} \|\bU_0\|_{\CD_{q_i, p}(\R^N_+)}
	+\|\bU_0\|_{\dot H^{0, 1}_{q_0}(\R^N_+)}\\
	&\enskip+\sum_{r\in\{q_0, q_1, q_2\}} (\|(1+t) \bff(\bU)\|_{L_p(\R_+, L_r(\R^N_+))}+\|(1+t) \bG(\bU)\|_{L_p(\R_+, \dot H^1_r(\R^N_+))})\Big)\\
	& \le C \sigma^2
\end{aligned}
\end{equation*}
provided that \eqref{condi}.
Choosing $\sigma>0$ so small that $C\sigma<1$, we have
\[
	E(\bV) \le \sigma,
\]
which implies that $\bV \in \CI_{\sigma}$.
Define a solution map $\Phi$ as $\Phi(\bU) = \bV$, then $\Phi$ maps from $\CI_{\sigma}$ into itself.

Next, we prove the map $\Phi$ is a contraction map; namely, 
it holds that there exists $\delta \in (0, 1)$ such that
\begin{equation}\label{contract}
	E(\Phi(\bU_1) - \Phi(\bU_2)) \le \delta E(\bU_1 - \bU_2)
\end{equation}
for any $\bU_1, \bU_2 \in \CI_\sigma$.
Let $\Phi(\bU_i) = \bV_i = (\bv_i, \bP_i)$ for $i=1, 2$.
Set $\bV = (\bv, \bP) = (\bv_1, \bP_1) - (\bv_2, \bP_2)$ and $\fp = \fp_1 - \fp_2$.
Then $(\bV, \fp)$ is a solution of the following problem.
\[
\left\{
\begin{aligned}
	&\pd_t\bv -\Delta \bv + \nabla \fp + \beta \DV (\Delta \bP -a \bP)=\bff(\bU_1) - \bff(\bU_2),
	\enskip \dv \bv=0& \quad&\text{in $\R^N_+$}, \enskip t \in \R_+,\\
	&\pd_t \bP - \beta \bD(\bv) - \Delta \bP + a \bP =\bG(\bU_1) - \bG(\bU_2) & \quad&\text{in $\R^N_+$}, \enskip t \in \R_+,\\
	&\bv = 0, \enskip \pd_N \bP = 0& \quad&\text{on $\R^N_0$}, \enskip t \in \R_+,\\
	&\bV|_{t=0}=0& \quad&\text{in $\R^N_+$}.
\end{aligned}
\right.
\]
In addition, it follows from Theorem \ref{thm:weight} that
\begin{equation}\label{Ev}
\begin{aligned}
	E(\bV) 
	\le \sum_{r\in\{q_0, q_1, q_2\}} &(\|(1+t) (\bff(\bU_1)-\bff(\bU_2))\|_{L_p(\R_+, L_r(\R^N_+))}\\
	&\enskip
	+\|(1+t) (\bG(\bU_1)-\bG(\bU_2))\|_{L_p(\R_+, \dot H^1_r(\R^N_+))}).
\end{aligned}
\end{equation}
By the same calculation that yields \eqref{est:nonlinear}, we have 
\begin{equation}\label{est:contract}
\begin{aligned}
	&\sum_{r\in\{q_0, q_1, q_2\}} \|(1+t) (\bff(\bU_1) - \bff(\bU_1))\|_{L_p(\R_+, L_r(\R^N_+))}
	\le C \sum_{k=1}^4 (E(\bU_1) + E(\bU_2))^k E(\bU_1 - \bU_2),\\
	&\sum_{r\in\{q_0, q_1, q_2\}} \|(1+t) (\bG(\bU_1) - \bG(\bU_2))\|_{L_p(\R_+, \dot H^1_r(\R^N_+))}  
	\le C \sum_{k=1}^2 (E(\bU_1) + E(\bU_2))^k E(\bU_1 - \bU_2)
\end{aligned}
\end{equation}
under the condition \eqref{condi}.
In fact, for instance, we consider
\[
	(\bu_1 \cdot \nabla)\bu_1 - (\bu_2 \cdot \nabla)\bu_2
	= ((\bu_1 - \bu_2) \cdot \nabla)\bu_1 - (\bu_2 \cdot \nabla)(\bu_1 - \bu_2).
\]
One can again use Lemma \ref{sup} and obtain that
\begin{align*}
	&\|(1+t)((\bu_1 - \bu_2) \cdot \nabla)\bu_1\|_{L_p(\R_+, L_{q_i}(\R^N_+))}\\ 
	&\le C \|\bu_1-\bu_2\|_{L_\infty(\R_+, L_{q_i}(\R^N_+))} \sum_{r \in \{q_1, q_2\}} \|(1+t)\nabla^2 \bu_1\|_{L_p(\R_+, L_r(\R^N_+))}
	\le CE(\bU_1 - \bU_2) E(\bU_1),\\
	&\|(1+t)(\bu_2 \cdot \nabla)(\bu_1 - \bu_2)\|_{L_p(\R_+, L_{q_i}(\R^N_+))}\\ 
	&\le C \|\bu_2\|_{L_\infty(\R_+, L_{q_i}(\R^N_+))} \sum_{r \in \{q_1, q_2\}} \|(1+t)\nabla^2 (\bu_1 - \bu_2)\|_{L_p(\R_+, L_r(\R^N_+))}
	\le CE(\bU_2) E(\bU_1 - \bU_2)
\end{align*}
for $i=1, 2$ if $(1-\theta)/q_1 + \theta/q_2 = 1/N$.
Therefore, we have
\[
	\|(\bu_1 \cdot \nabla)\bu_1 - (\bu_2 \cdot \nabla)\bu_2\|_{L_p(\R_+, L_{q_i}(\R^N_+))}
	\le C(E(\bU_1)+E(\bU_2)) E(\bU_1 - \bU_2).
\]
Repeating similar computations, we arrive at \eqref{est:contract}. 
It holds from  \eqref{Ev} and \eqref{est:contract} that
\[
	E(\bV) \le C \sum_{k=1}^4 (E(\bU_1) + E(\bU_2))^k E(\bU_1 - \bU_2).
\]
Choosing $\sigma>0$ so small that $C \sum_{k=1}^4 (E(\bU_1) + E(\bU_2))^k < \delta$, we have \eqref{contract}.

Therefore, the Banach fixed point argument indicates that there exists a unique solution $\bV \in \CI_{\sigma}$ such that $\Phi(\bV) = \bV$, namely,
$(\bV, \fp)$ is a unique solution of \eqref{nonlinear}.

The weighted estimate of $\nabla \fp$ follows from the first equation of \eqref{nonlinear}.
Since $\bV=(\bv, \bP) \in \CI_\sigma$ is a solution of \eqref{nonlinear} and $\|(1+t)\bff(\bV)\|_{L_p(\R_+, L_{q_i}(\R^N_+))} \le \sigma$, we have
\begin{align*}
	&\|(1+t)\nabla \fp\|_{L_p(\R_+, L_{q_i}(\R^N_+))} \\
	&\le C\|(1+t)(\pd_t \bv, \nabla^2 \bv)\|_{L_p(\R_+, L_{q_i}(\R^N_+))} + \|(1+t)(\nabla^3 \bP, \nabla \bP)\|_{L_p(\R_+, L_{q_i}(\R^N_+))}
	+ \|(1+t) \bff(\bV)\|_{L_p(\R_+, L_{q_i}(\R^N_+))}\\
	&\le C\sigma
\end{align*}
for $i=1, 2$, which completes the proof of Theorem \ref{thm:global}.

\end{document}